\newtheorem{theorem}{Theorem}[section]
\newtheorem{lemma}[theorem]{Lemma}
\newtheorem{proposition}[theorem]{Proposition}
\newcommand{\imod}[1]{\allowbreak\mkern4mu({\operator@font mod}\,\,#1)}
\newtheorem*{thmA}{Theorem~A}
\newtheorem*{thmB}{Theorem~B}
\newtheorem*{thmC}{Theorem~C}
\newtheorem*{thmD}{Theorem~D}
\newcommand{\Irr}{{\mathrm {Irr}}}
\newcommand{\cd}{{\mathrm {cd}}}
\newcommand{\Aut}{{\mathrm {Aut}}}
\newcommand{\PSL}{{\mathrm {PSL}}}
\newcommand{\SSS}{\mathrm{S}}
\newcommand{\A}{\mathrm{A}}
\newcommand{\Atlas}{{\sf Atlas}}
\newcommand{\GAP}{{\sf GAP}}
\newcommand{\Van}{{\mathrm {Van}}}
\newcommand{\ord}{{\mathrm {ord}}}
\newcommand{\Vo}{{\mathrm {Vo}}}
\theoremstyle{definition}
\begin{document}
\title[\textbf{Orders of  vanishing elements}]{\textbf{On the orders of vanishing elements of finite groups}}

\author{Sesuai Y. Madanha}
\address{Department of Mathematics and Applied Mathematics, University of Pretoria, Private Bag X20, Hatfield, Pretoria 0028, South Africa}
\email{sesuai.madanha@up.ac.za}

\subjclass[2010]{Primary 20C15}

\date{\today}

\keywords{orders of vanishing elements, $ p' $-elements, normal $ p $-complements}

\begin{abstract}
Let $ G$ be a finite group and $p$ be a prime. Let $ \Vo(G) $ denote the set of the orders of vanishing elements,  $\Vo_{p} (G)$ be the subset of  $ \Vo(G) $ consisting of those orders of vanishing elements divisible by $p$ and $\Vo_{p'} (G) $ be the subset of $ \Vo(G) $ consisting of those orders of vanishing elements not divisible by $p$. Dolfi, Pacifi, Sanus and Spiga proved that if $ a $ is not a $ p $-power for all $ a\in \Vo(G)$, then $ G $ has a normal Sylow $ p $-subgroup. In another article, the same authors also show that if if $ \Vo_{p'}(G) =\emptyset $, then $ G $ has a normal nilpotent $ p $-complement. These results are variations of the well known Ito-Michler and Thompson theorems. In this article we study solvable groups such that $|\Vo_{p}(G)| = 1 $ and show that $ P' $ is subnormal. This is analogous to the work of Isaacs, Mor\'eto, Navarro and Tiep where they considered groups with just one character degree divisible by $ p $. We also study certain finite groups $G$ such that $|\Vo_{p'}(G)| = 1 $ and we prove that $ G $ has a normal subgroup $ L $ such that $ G/L $ a normal $ p $-complement and $ L $ has a normal $ p $-complement.  This is analogous to the recent work of Giannelli, Rizo and Schaeffer Fry on character degrees with a few $p'$-character degrees. Bubboloni, Dolfi and Spiga studied finite groups such that every vanishing element is of order $ p^{m} $ for some integer $ m\geqslant 1 $. As a generalization, we investigate groups such that $ \gcd(a,b)=p^{m} $ for some integer $ m \geqslant 0 $, for all $ a,b\in \Vo(G) $. We also study finite solvable groups whose irreducible characters vanish only on elements of prime power order. 
\end{abstract}

\maketitle


`\section{Introduction}
One of the interesting problems in character theory of finite groups is determine the structure a group using information from its character table. Below are some results to this effect. Let $G$ be a finite group and $p$ be a prime. Let $\Irr(G)$ denote the set of complex irreducible characters of $G$, $\cd(G)$ the set of character degrees of $G$, $ \cd_{p}(G)$ the subset of $\cd(G) $ consisting of character degrees divisible by $p$ and $ \cd_{p'}(G)$ the subset of $\cd(G) $ consisting of those character degrees not divisible by $p$. A classical theorem of Thompson \cite[Corollary 12.2]{Isa06} states that if $ |\cd_{p'}(G)| = 1$, then $ \textbf{O}^{pp'}(G) = 1$, where $\textbf{O}^{pp'}(G) = \textbf{O}^{p'}(\textbf{O}^{p}(G))$. Let $ \textbf{O}^{pp'pp'}(G) = \textbf{O}^{pp'}(\textbf{O}^{pp'}(G))$. Recently, Giannelli, Rizo and Schaeffer Fry \cite{GRS20} proved a variation Thompson's theorem by showing the following result:

\begin{theorem}\cite[Theorem A]{GRS20}\label{GRS20Theorem}
Let G be a finite group and let $ p > 3 $ be a prime. Suppose that $|\cd_{p'}(G)| = 2 $. Then G is solvable and $ \textbf{O}^{pp'pp'}(G) = 1$.
\end{theorem}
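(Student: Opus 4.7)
The plan is to prove the theorem in two stages: first reduce to the solvable case via the classification of finite simple groups (CFSG), and then within the solvable case show that $N:=\textbf{O}^{pp'}(G)$ satisfies $|\cd_{p'}(N)|\leqslant 1$, so that Thompson's theorem applied to $N$ delivers $\textbf{O}^{pp'}(N)=\textbf{O}^{pp'pp'}(G)=1$.

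For the solvability reduction, I would take $G$ to be a minimal counterexample. If $M$ is an abelian minimal normal subgroup of $G$, then $\cd_{p'}(G/M)\subseteq\cd_{p'}(G)$, so minimality forces $G/M$ (and hence $G$) to be solvable. Thus $G$ must contain a nonabelian minimal normal subgroup $M=S_1\times\cdots\times S_k$ with the $S_i$ isomorphic nonabelian simples. The crucial lemma to establish is the following: every nonabelian simple group $S$ with $p>3$ dividing $|S|$ satisfies $|\cd_{p'}(S)|\geqslant 3$. Granting this, products of $p'$-characters of the $S_i$ produce at least three $p'$-character degrees on $M$; by Clifford theory (in particular Gallagher's theorem, applied wherever these $M$-characters extend to their stabilizers), these propagate to at least three distinct $p'$-degrees of $G$, contradicting $|\cd_{p'}(G)|=2$.

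In the solvable case, write $\cd_{p'}(G)=\{1,d\}$ and set $N=\textbf{O}^{pp'}(G)$. Note that $\textbf{O}^{pp'}(G/N)=1$, so $G/N$ has a normal $p$-complement by Thompson's structural description. Given $\psi\in\Irr(N)$ with $p\nmid\psi(1)$, let $T=G_\psi$ and write $\chi=\eta^G\in\Irr(G\mid\psi)$ via the Clifford correspondence, where $\eta\in\Irr(T\mid\psi)$. Using the very restricted structure of $G/N$, together with Gallagher's theorem on extensions from $N$ to its inertia in the $p$-complement of $G/N$, one should be able to show that $\chi(1)$ is coprime to $p$. Since $\chi(1)\in\{1,d\}$, the possibilities for $\psi(1)$ are severely constrained, and a closer look at the $G$-action on $\Irr(N)$ should yield $|\cd_{p'}(N)|\leqslant 1$, at which point Thompson's theorem closes the argument.

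The main obstacle is expected to be the simple-group lemma in the reduction step: a uniform bound $|\cd_{p'}(S)|\geqslant 3$ for every nonabelian simple $S$ and prime $p>3$ with $p\mid|S|$ requires a detailed case analysis. Sporadic and small alternating groups are handled by direct inspection (e.g.\ using the $\Atlas$); for groups of Lie type in defining characteristic $p$ one combines the Steinberg character (of $p$-power degree) with semisimple and principal-series characters of $p'$-degree; for cross-characteristic Lie type groups one exhibits three unipotent characters of $p'$-degree via Lusztig's Jordan decomposition, paying particular attention to small-rank families such as $\PSL_2(q)$, $\PSL_3(q)$ and $\PSU_3(q)$. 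The hypothesis $p>3$ emerges naturally from genuine counterexamples at $p\in\{2,3\}$, most prominently for certain $\PSL_2(q)$.
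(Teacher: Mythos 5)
This statement is not proved in the paper at all: it is quoted verbatim as Theorem A of \cite{GRS20} and used as background, so there is no internal proof to compare yours against. Judged on its own terms, your outline does follow the broad architecture of the published argument (a CFSG-based reduction to the solvable case, plus a separate analysis of solvable groups), but as written it contains two genuine gaps. A smaller third issue: your induction is set up with the exact hypothesis $|\cd_{p'}(G)|=2$, yet for an abelian minimal normal subgroup $M$ you may have $|\cd_{p'}(G/M)|=1$, which is not covered by the inductive hypothesis; you would need to induct on the statement with $|\cd_{p'}(G)|\leqslant 2$ and invoke Thompson's theorem together with the simple-group analysis in the degenerate case.

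The first substantive gap is that your key lemma, $|\cd_{p'}(S)|\geqslant 3$ for every non-abelian simple $S$ with $p>3$ dividing $|S|$, is not strong enough to drive the reduction. If $M=S_1\times\cdots\times S_k$ is a non-abelian minimal normal subgroup and $\theta\in\Irr(M)$ has $p'$-degree, then any $\chi\in\Irr(G)$ over $\theta$ has degree $e\,|G:I_G(\theta)|\,\theta(1)$, and both the ramification $e$ and the index of the inertia group may be divisible by $p$; so three $p'$-degrees of $M$ need not yield three $p'$-degrees of $G$. Moreover, a non-diagonal product $\theta_{i_1}\times\cdots\times\theta_{i_k}$ is moved by $G$, so only diagonal characters $\theta\times\cdots\times\theta$ are directly usable. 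What is actually needed (and what \cite{GRS20} establishes, modulo a short list of exceptional cases) is the existence of two non-principal characters of $S$ with distinct $p'$-degrees, each extending to $\Aut(S)$; then Lemma \ref{BCLP07Lemma5} extends the diagonal characters to $G$ and produces the three distinct $p'$-degrees $1$, $\theta_1(1)^k$, $\theta_2(1)^k$. Your parenthetical ``wherever these $M$-characters extend to their stabilizers'' acknowledges the problem but does not solve it. The second gap is the solvable case, which is essentially unargued: the pivotal claim $|\cd_{p'}(N)|\leqslant 1$ for $N=\textbf{O}^{pp'}(G)$ does not follow from generalities, since a nontrivial $p'$-degree of $N$ may lie only under characters of $G$ of degree divisible by $p$, so the hypothesis on $\cd_{p'}(G)$ gives no direct control over $\cd_{p'}(N)$; the phrases ``one should be able to show'' and ``a closer look should yield'' stand in for what is the bulk of the work in \cite{GRS20}. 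As it stands, the proposal is a plausible plan rather than a proof.
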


We are interested in studying sets of vanishing elements with some restrictions corresponding to these results. An element $ g \in G $ is a vanishing element of $ G $ if there exists $\chi \in \Irr(G) $ such that $\chi(g) = 0 $. The set of all vanishing elements of G is denoted by Van(G). A classical theorem of Burnside \cite[Theorem 3.15]{Isa06} states that $\Van(G)$ is non-empty if there is a non-linear $\chi \in \Irr(G)$. Let $ \Vo(G) $ denote the set of the orders of elements in $ \Van(G) $, $ \Vo_{p}(G) $ be the subset of $ \Vo(G) $ consisting of orders of vanishing elements divisible by $p$ and $ \Vo_{p'}(G) $ be the subset of $ \Vo(G) $ consisting of those orders of vanishing elements not divisible by $p$.

Dolfi Pacifi, Sanus and Spiga proved the following result analogous to Thompson's Theorem:

\begin{theorem}\cite[Corollary B]{DPSS09}\label{normalp-complementvanishing} Let $ G $ be a finite group and let $p$ be a prime. Suppose that $ \Vo_{p'}(G) =\emptyset $. Then $ \textbf{O}^{pp'}(G)=1$.
\end{theorem}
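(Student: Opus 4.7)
The plan is to argue by induction on $|G|$, working with a minimal counterexample $G$ for which $\Vo_{p'}(G) = \emptyset$ yet $\OO^{pp'}(G) \neq 1$. The first step is to check that the hypothesis is inherited by quotients. For any $N \normal G$ and any $p'$-element $\bar g \in G/N$, choose a lift $g \in G$ and decompose $g = g_p g_{p'}$ into commuting $p$- and $p'$-parts; since $\bar g$ is a $p'$-element, uniqueness of the $p$-$p'$-decomposition in $G/N$ forces $\bar g_p = \bar 1$, so that $g_{p'}$ is a $p'$-lift of $\bar g$. Combined with the inclusion $\Irr(G/N) \subseteq \Irr(G)$, this gives $\Vo_{p'}(G/N) \subseteq \Vo_{p'}(G) = \emptyset$, so by minimality every proper quotient of $G$ has $\OO^{pp'}(G/N)=1$, hence a normal $p$-complement.

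Next, I would choose a minimal normal subgroup $N$ of $G$ and split into three cases. If $N$ is a $p'$-group, then the preimage in $G$ of the normal $p$-complement of $G/N$ is itself a $p'$-group (an extension of $p'$-by-$p'$), hence a normal $p$-complement of $G$, contradicting $\OO^{pp'}(G) \neq 1$. If $N$ is a direct product of nonabelian simple groups, I would invoke the (classification-based) fact that every nonabelian simple group admits an irreducible character vanishing at a nontrivial $p'$-element; a suitable extension/induction of such a character via Clifford theory produces a $p'$-vanishing element of $G$, again contradicting the hypothesis. This reduces us to the case where $N$ is an elementary abelian $p$-group.

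In this final case, $G/N$ has a normal $p$-complement $L/N$, and Schur--Zassenhaus writes $L = N \rtimes H$ with $H$ a $p'$-Hall subgroup of $L$; note $G/L$ is a $p$-group. If $H$ centralizes $N$, then $H$ is characteristic in $L$ and hence normal in $G$, giving a normal $p$-complement and the required contradiction. Otherwise $H$ acts nontrivially on $N$, so there is a nonprincipal $H$-orbit on $\Irr(N)$, and using the inertia subgroup of an orbit representative together with the explicit form of induced characters on $N \rtimes H$, one builds an irreducible character of $G$ whose value at some nontrivial $p'$-element of $H$ is zero, completing the contradiction. The main obstacles are, first, the CFSG-based step of exhibiting a $p'$-vanishing element in every nonabelian simple group (which I expect to be the principal technical lemma of the paper, handled family-by-family), and second, the careful inertia/orbit analysis in the elementary abelian $p$-group case, where one must locate the desired zero on a genuine $p'$-element rather than a $p$-singular one.
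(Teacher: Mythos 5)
This statement is quoted in the paper as \cite[Corollary B]{DPSS09}; the paper contains no proof of it, so I am measuring your sketch against the argument in the cited source. Your reduction is sound as far as it goes: the inheritance of the hypothesis by quotients via the $p$-$p'$-decomposition of a lift is correct; the case of a $p'$ minimal normal subgroup is immediate; and the nonabelian minimal normal case is exactly what the paper's own Lemma \ref{everyphasavanishingelement} supplies (a vanishing $r$-element of $G$ inside $N$ for every prime $r$ dividing $|N|$, and since $|\pi(N)|\geqslant 3$ one may take $r\neq p$).

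The genuine gap is in your final case, $N$ elementary abelian of order a power of $p$. Writing $L=N\rtimes H$ with $H$ a Hall $p'$-subgroup, the inertia-group argument you describe does produce a vanishing $p'$-element of $L$: for a nonprincipal $\lambda\in\Irr(N)$ with $H$-orbit $O$, Burnside's lemma gives $\sum_{1\neq h\in H}|\mathrm{Fix}_O(h)|=|H|-|O|\leqslant |H|-2$, so some nontrivial $h\in H$ fixes no point of $O$, hence is not $L$-conjugate into $I_L(\lambda)=NI_H(\lambda)$, and $\tilde{\lambda}^L$ vanishes at $h$. But you need a vanishing $p'$-element of $G$, and irreducible characters of $L$ need not extend to $G$. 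Working with $T=I_G(\lambda)$ instead, a character of $G$ over $\lambda$ is forced by the induction formula to vanish at $h$ only when no $G$-conjugate of $h$ fixes any character in the (larger) $G$-orbit of $\lambda$; nothing in your setup rules out that every nontrivial $p'$-element fixes some character in every $G$-orbit, and when that happens one must decide whether cancellation still occurs in the induced-character sum. That is precisely the hard content of the theorem, not a routine Clifford computation. The cited source does not attack it head-on: Corollary B there is deduced from their Theorem~A (the Ito--Michler-type statement, also quoted in this paper), namely for each prime $q\neq p$ no $q$-element of $G$ vanishes, so $G$ has a normal Sylow $q$-subgroup for every $q\neq p$, and the product of these is a normal nilpotent $p$-complement, giving $\textbf{O}^{pp'}(G)=1$. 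Theorem~A in turn rests on the Isaacs--Navarro--Wolf analysis of nonvanishing elements and the Fitting subgroup (with separate work at the prime $2$), which is the machinery your last step would have to reproduce.
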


As a variation of Theorem \ref{normalp-complementvanishing}, we study certain finite groups $G$ such that $|\Vo_{p'}(G)| = 1 $ with an extra condition and obtain our first result.

\begin{thmA}\label{thm A}
Let $G$ be a finite group and let $p $ and $ q $ be distinct primes. Suppose that $|\Vo_{p'}(G)| = 1$. Then G is solvable. Moreover, suppose that one of the following holds:
\begin{itemize}
\item[(a)] $ \Vo_{p'}(G)=\{b\} $ such that $ b $ is divisible by at least two primes;
\item[(b)] $ \Vo_{p'}(G)=\{q^{n}\} $ for some positive integer $ n $ and $ Q' $ is subnormal, where $ Q $ is a Sylow $ q $-subgroup of $ G $.
\end{itemize}
Then $\textbf{O}^{pp'pp'}(G) = 1 $.
\end{thmA}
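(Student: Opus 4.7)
My plan has two parts: first establish solvability from $|\Vo_{p'}(G)|=1$, and then, under hypothesis (a) or (b), deduce $\textbf{O}^{pp'pp'}(G)=1$.

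\emph{Solvability.} I would argue by contradiction, choosing a minimal non-solvable counterexample $G$. Such a $G$ has a unique minimal normal subgroup $N = T_1 \times \cdots \times T_k$, a direct power of a non-abelian simple group $T$. The plan is to invoke the now-standard input on vanishing $p'$-classes in non-abelian simple groups (due to Dolfi-Pacifi-Sanus-Spiga and subsequent work): every non-abelian simple group has at least two distinct orders of vanishing $p'$-elements, for every prime $p$. A Clifford-theoretic lift, together with an almost-simple analysis when $G/N$ is non-trivial, would then produce two distinct orders in $\Vo_{p'}(G)$, contradicting $|\Vo_{p'}(G)|=1$. The main subtlety here is the case analysis for small or exceptional simple groups where the vanishing-order count is borderline, and checking that the appropriate $p'$-elements actually lift (rather than being absorbed by the quotient $G/N$).

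\emph{Reduction for the moreover part.} Now assume $G$ solvable. Set $L = \textbf{O}^{pp'}(G)$; the task reduces to proving $\textbf{O}^{pp'}(L) = 1$, which by Theorem \ref{normalp-complementvanishing} applied inside $L$ is equivalent to $\Vo_{p'}(L) = \emptyset$. The key technical step is to transfer vanishing from $L$ up to $G$: given $g \in L$ and $\theta \in \Irr(L)$ with $\theta(g)=0$, I would use Clifford theory in the solvable setting (inertia groups, extension of $\theta$ to its stabiliser, and care with the weighted sum $\chi(g) = e\sum \theta_i^g(g)$) to produce some $\chi \in \Irr(G)$ with $\chi(g)=0$, so that the order of $g$ must coincide with the unique element of $\Vo_{p'}(G)$. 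Under hypothesis (a), $b$ is divisible by two distinct primes $r,s \neq p$; an appropriate power of $g$ then produces a $p'$-vanishing element of $G$ of prime-power order, which cannot equal $b$, a contradiction that forces $\Vo_{p'}(L) = \emptyset$. Under hypothesis (b), the $p'$-vanishing elements of $G$ all have order $q^n$ and hence are $q$-elements lying in a single Sylow $q$-subgroup $Q$; the subnormality of $Q'$ is then used to pass to a quotient where $Q'$ lies in the kernel, reducing $L$ to a group with abelian Sylow $q$-subgroups over a suitable normal series, to which a variant of Theorem \ref{normalp-complementvanishing} applies and yields $\Vo_{p'}(L) = \emptyset$.

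\emph{Main obstacle.} The principal technical difficulty is precisely the zero-transfer step: vanishing is not automatically inherited along normal inclusions, because character values on a normal subgroup are weighted sums of $G$-conjugate constituents and can fail to vanish even when individual constituents vanish. In case (b) this obstacle is circumvented exactly by the subnormality hypothesis on $Q'$, which controls the $G$-action on the vanishing $q$-classes of $L$; I expect the bulk of the argument to be spent here. The solvability secured in the first step and the additional structural hypotheses in (a)/(b) are calibrated to make this zero-transfer go through, and the explicit bifurcation between (a) and (b) in the statement reflects the two natural ways the set $\Vo_{p'}(G)=\{b\}$ can sit inside the Sylow structure of $G$.
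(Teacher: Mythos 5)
Your solvability argument follows essentially the paper's route (minimal counterexample, unique non-abelian minimal normal subgroup, two distinct $p'$-orders of vanishing elements produced inside it via defect-zero and extendible characters, as in Lemma \ref{everyphasavanishingelement}), and that part is sound. The ``moreover'' part, however, rests on a step that does not work and that the paper deliberately avoids: you set $L=\textbf{O}^{pp'}(G)$ and try to transfer zeros from $\Irr(L)$ up to $\Irr(G)$ so as to force $\Vo_{p'}(L)\subseteq\Vo_{p'}(G)$. There is no such zero-transfer for normal subgroups --- $\chi(g)$ is a weighted sum $e\sum_i\theta_i(g)$ over a $G$-orbit of constituents and can be nonzero even when some $\theta_i(g)=0$ --- and neither hypothesis (a) nor (b) repairs this. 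In case (a) you additionally need a power of a vanishing element to be vanishing, which is also false in general. The paper's actual argument never looks inside $L$ at all: in case (a) it observes that for every prime $r\neq p$ no $r$-element of $G$ can vanish (its order would be a prime power lying in $\Vo_{p'}(G)=\{b\}$ with $b$ not a prime power), so by \cite[Theorem A]{DPSS09} every Sylow $r$-subgroup with $r\neq p$ is normal; this yields a normal nilpotent Hall $p'$-subgroup and already $\textbf{O}^{pp'}(G)=1$.

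In case (b) the paper's proof is a case analysis on the vanishing prime graph $\Gamma(G)$: when $\Gamma(G)$ is disconnected it invokes Theorem \ref{DPSS10bTheoremA} together with Theorem D and the Frobenius/nearly $2$-Frobenius structure; when $\Gamma(G)$ is connected and $\Vo(G)$ contains both $p$-powers and $q^{n}$, it passes to $\overline{G}=G/H\textbf{O}_p(G)\textbf{O}_q(G)$ (with $H$ the normal nilpotent Hall $\{p,q\}'$-subgroup obtained again from \cite[Theorem A]{DPSS09}) and uses the subnormality of $Q'$, which forces $Q'\leqslant \textbf{O}_q(G)$, to make $\overline{Q}$ abelian; then $\overline{G}=\overline{Q}\,\overline{P}$ is a product of an abelian and a nilpotent subgroup and Lemma \ref{FdGHN91Lemma3} supplies the normal series giving $\textbf{O}^{pp'pp'}(G)=1$. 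That lemma is precisely where the hypothesis on $Q'$ does its work; your sketch (``a variant of Theorem \ref{normalp-complementvanishing} applies'') does not identify it and leaves the decisive step unproved. So the proposal has a genuine gap in both halves of the ``moreover'' statement.
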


We remark that it is not clear if the hypothesis that $ Q' $ is subnormal in $ G $ is necessary. In our proof of Theorem A, we use a theorem of Franciosi, de Giovanni, Heineken and Newell (see Lemma \ref{FdGHN91Lemma3}) which states that if $ G=AB $ is a product of an abelian group $ A $ and a nilpotent group $ B $, then $ A\mathbf{F}(G) $ is normal in $ G $ and $ G $ has Fitting height at most $ 3 $. In general, it is well known that for any positive integer $ n $, there is group of order $ p^{a}q^{b} $ and Fitting height $ n $. Hence a Fitting height of a product of two Sylow subgroups can be arbitrarily large.

The opposite of Thompson's theorem is the famous Ito-Michler theorem. It states that if $ \cd_{p}(G)=\emptyset $, then $ G $ has a normal and abelian Sylow $ p $-subgroup. As a generalization of this, Isaacs, Mor\'eto, Navarro and Tiep \cite[Theorem A]{IMNT09} proved that if $ |\cd_{p}(G)|=1 $, then $ P' $ is abelian and subnormal in $ G $. In particular, $ P/\textbf{O}_{p}(G) $ is cyclic when $ G $ is solvable. Another generalization of the Ito-Michler theorem in terms of vanishing elements was proved by Dolfi, Pacifi, Sanus and Spiga \cite{DPSS09}. They showed that if all $ p $-elements are non-vanishing elements of $ G $, then $ G $ has a normal Sylow $ p $-subgroup and that the Sylow $ p $-subgroup need not be abelian. Their result implies that if $ \Vo_{p}(G)=\emptyset $, then $ G $ has a normal Sylow $ p $-subgroup. As an analogue of the theorem of Isaacs, Mor\'eto, Navarro and Tiep \cite[Theorem A(b)]{IMNT09} for solvable groups, we consider the following for solvable groups. 

\begin{thmB}\label{thmB}
Let $ G $ be a finite solvable group. If $ |\Vo_{p}(G)|=1 $, then $ P' $ is subnormal in $ G $. In particular, $ P/\textbf{O}_{p}(G) $ is cyclic.
\end{thmB}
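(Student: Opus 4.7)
The plan is to adapt the strategy of Isaacs--Mor\'eto--Navarro--Tiep \cite{IMNT09}, who proved the analogous conclusion under the stronger hypothesis $|\cd_{p}(G)|=1$, combined with the Dolfi--Pacifi--Sanus--Spiga result on non-vanishing $p$-elements. Write $\Vo_{p}(G)=\{m\}$ with $p\mid m$.

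The first observation is that it suffices to show $\bar{P}:=P/\mathbf{O}_{p}(G)$ is cyclic, since cyclicity gives $P'\leq \mathbf{O}_{p}(G)\triangleleft G$, whence the subnormality of $P'$ follows at once. We may assume $P\not\triangleleft G$, for otherwise $P=\mathbf{O}_{p}(G)$ and the conclusion is trivial; hence by the DPSS theorem there exists a vanishing $p$-element of $G$. Since $|\Vo_{p}(G)|=1$, this forces the unique element of $\Vo_{p}(G)$ to be a prime power, so $\Vo_{p}(G)=\{p^{a}\}$ for some $a\geq 1$. A crucial consequence of this refinement is that every vanishing element of $G$ has order coprime to $p$ or exactly $p^{a}$; in particular, no vanishing element of $G$ has both a nontrivial $p$-part and a nontrivial $p'$-part.

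Next I would work in the reduced setting $\mathbf{O}_{p'}(G)=1$, where Hall--Higman gives $\mathbf{F}(G)=\mathbf{O}_{p}(G)$ and $\mathbf{C}_{G}(\mathbf{O}_{p}(G))\leq \mathbf{O}_{p}(G)$, so that $V:=\mathbf{O}_{p}(G)/\Phi(\mathbf{O}_{p}(G))$ is a faithful $G/\mathbf{O}_{p}(G)$-module. Coupling the dichotomy above with the Malle--Navarro--Olsson theorem (every $\chi\in\Irr(G)$ with $p\mid \chi(1)$ vanishes on some $p$-singular element, which by the dichotomy must be a $p$-element of order $p^{a}$), one obtains that every $\chi\in\Irr(G)$ of $p$-divisible degree vanishes on a prescribed $G$-conjugacy class of $p$-elements. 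I would then rerun the module-theoretic analysis of \cite[\S 3]{IMNT09} on $V$, aiming to show that $\bar{P}$ acts on $V$ through a cyclic image, and hence is itself cyclic.

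The principal difficulty is that $|\Vo_{p}(G)|=1$ is strictly weaker than $|\cd_{p}(G)|=1$: many characters of $p$-divisible degree may share a single common vanishing class. The main work therefore lies in translating the single-order hypothesis into enough rigidity on the characters vanishing at a fixed $p$-element of order $p^{a}$ to re-run the faithful-module argument of \cite{IMNT09}, and in performing the reduction to $\mathbf{O}_{p'}(G)=1$ carefully: unlike in the character-degree setting, the naive inflation argument does not yield $|\Vo_{p}(G/\mathbf{O}_{p'}(G))|\leq 1$ directly, so this reduction must be carried out while keeping track of the cyclicity of $P/\mathbf{O}_{p}(G)$ rather than of the hypothesis itself.
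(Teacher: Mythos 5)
There is a genuine gap: what you have written is a research plan, not a proof. The two steps you yourself flag as ``the main work'' --- translating $\Vo_p(G)=\{p^a\}$ into enough rigidity to rerun the module-theoretic analysis of \cite[\S 3]{IMNT09}, and the reduction to $\mathbf{O}_{p'}(G)=1$ --- are exactly the steps that are missing, and you correctly observe that neither is routine. The hypothesis $|\Vo_p(G)|=1$ is not inherited by quotients (a character of $G/N$ inflates to a character of $G$ vanishing on a whole coset $gN$, whose elements need not have order equal to $\ord(gN)$), so the standard ``work in $G/\mathbf{O}_{p'}(G)$'' reduction is not available; and knowing only that every character of $p$-divisible degree vanishes somewhere on a single class of $p$-elements of order $p^a$ gives far less control over the action of $\bar P$ on $V=\mathbf{O}_p(G)/\Phi(\mathbf{O}_p(G))$ than the IMNT hypothesis does. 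As it stands, the argument stops precisely where the theorem would have to be proved.

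The paper takes a completely different and much shorter route, which you may want to compare. Your own observation that $\Vo_p(G)=\{p^a\}$ is the key: it forces $p$ to be an isolated vertex of the vanishing prime graph $\Gamma(G)$ (no vanishing element has order divisible by $pq$ with $q\neq p$), unless all $p$-elements are non-vanishing, in which case $P\normeq G$ by \cite[Theorem A]{DPSS09} and we are done. If $\Gamma(G)$ is a single vertex, \cite[Corollary B]{BDS09} (Theorem \ref{normalp-complementvanishing2}) says $G$ is a $p$-group or $G/\mathbf{Z}(G)$ is Frobenius with complement of $p$-power order, and $P/\mathbf{O}_p(G)$, being isomorphic to a Frobenius complement that is a $p$-group, is cyclic or generalized quaternion; the quaternion case is excluded because it produces vanishing elements of orders $2$ and $4$. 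If $\Gamma(G)$ is disconnected, Theorem \ref{DPSS10bTheoremA} says $G$ is Frobenius or nearly $2$-Frobenius, and in each case $P$ either lies in a nilpotent normal subgroup (hence $P=\mathbf{O}_p(G)$) or maps isomorphically into a cyclic or generalized-quaternion Frobenius complement section, with the quaternion case again excluded. This sidesteps the module-theoretic machinery entirely; the graph-theoretic classification results do all the structural work that IMNT had to extract from character degrees.
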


We now discuss our next problem. Bubboloni, Dolfi and Spiga studied finite groups such that every vanishing element is of order $p^{m}$ for some integer $ m > 1  $. They proved the following result:

\begin{theorem}\cite[Corollary B]{BDS09}\label{normalp-complementvanishing2}
Let $ G $ be a finite group and $ p $ be a prime. If $a =p^{m} $ for some integer $ m \geqslant 1 $, for all $ a\in \Vo(G)$, then one of the following holds:
\begin{itemize}
\item[(a)] $ G $ is a $ p $-group.
\item[(b)] $ G/\mathbf{Z}(G) $ is a Frobenius group with a Frobenius complement of $ p $-power order and $ \mathbf{Z}(G)=\textbf{O}_{p}(G) $. 
\end{itemize}
\end{theorem}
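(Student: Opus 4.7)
The plan is to proceed in three stages. First, since every vanishing element has order $p^m$ with $m \geq 1$, the set $\Vo_{p'}(G)$ is empty. Theorem \ref{normalp-complementvanishing} then gives $\mathbf{O}^{pp'}(G)=1$, so $G$ has a normal $p$-complement $N$, and writing $P \in \Syl_p(G)$ we have $G = P \ltimes N$. If $N = 1$ then $G$ is a $p$-group and conclusion (a) holds, so henceforth assume $N \neq 1$.

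Next, I would pin down the Frobenius structure of $G/\mathbf{O}_p(G)$. The key observation is that for every $y \in N \setminus \{1\}$ and every $x \in \Centralizer_P(y) \setminus \{1\}$, the commuting product $xy$ has order $\ord(x)\ord(y)$, which is not a $p$-power, so by hypothesis $xy \notin \Van(G)$. Invoking an appropriate structural result on non-vanishing elements in finite solvable groups---namely that such elements lie in the Fitting subgroup $\mathbf{F}(G) = \mathbf{O}_p(G) \times N$---one deduces $x = (xy)y^{-1} \in \mathbf{F}(G) \cap P = \mathbf{O}_p(G)$. Combined with the automatic containment $\mathbf{O}_p(G) \leq \Centralizer_P(y)$ (coprime normal subgroups commute), this gives $\Centralizer_P(y) = \mathbf{O}_p(G)$ for every $y \in N \setminus \{1\}$. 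Hence $P/\mathbf{O}_p(G)$ acts fixed-point-freely on $N \setminus \{1\}$, and $G/\mathbf{O}_p(G)$ is a Frobenius group with complement $P/\mathbf{O}_p(G)$ of $p$-power order and kernel $N\mathbf{O}_p(G)/\mathbf{O}_p(G)$.

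Finally, to obtain conclusion (b) I would establish $\mathbf{Z}(G) = \mathbf{O}_p(G)$. The inclusion $\mathbf{Z}(G) \leq \mathbf{O}_p(G)$ is immediate from the preceding step, since Frobenius groups have trivial center. The reverse inclusion reduces to $\mathbf{O}_p(G) \leq \mathbf{Z}(P)$ (as $\mathbf{O}_p(G)$ already commutes with $N$ by coprimality), and this centrality of $\mathbf{O}_p(G)$ in $P$ is the main obstacle. My plan is to suppose for contradiction that some $z \in \mathbf{O}_p(G)$ fails to be centralized by $P$ and then, via Clifford theory for characters of $G$ lying over a non-principal $\theta \in \Irr(N)$ whose $G$-stabilizer is precisely $N\mathbf{O}_p(G)$ (by the Frobenius structure), produce an irreducible character $\chi \in \Irr(G)$ vanishing on some element of non-$p$-power order, contradicting the hypothesis. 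The restrictive structure of $p$-group Frobenius complements---cyclic when $p$ is odd, and cyclic or generalized quaternion when $p=2$---should constrain $P$ sufficiently to make such a construction feasible.
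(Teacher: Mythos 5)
First, a point of reference: the paper offers no proof of this statement at all --- it is quoted verbatim from Bubboloni--Dolfi--Spiga \cite[Corollary B]{BDS09} --- so your proposal can only be measured against that source, and on its own merits it is incomplete.

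Your Stage 1 is fine. The first genuine gap is in Stage 2, where you invoke the claim that non-vanishing elements of a finite solvable group lie in $\mathbf{F}(G)$. That is not the theorem. The Isaacs--Navarro--Wolf result says only that for a non-vanishing element $g$ of a solvable group the coset $g\mathbf{F}(G)$ has $2$-power order in $G/\mathbf{F}(G)$; equivalently, non-vanishing elements of \emph{odd} order lie in $\mathbf{F}(G)$. In your setting $xy\mathbf{F}(G)=x\mathbf{F}(G)$ is the image of a $p$-element, so the deduction $x\in\mathbf{O}_{p}(G)$ goes through only when $p$ is odd; for $p=2$ --- precisely the case in which generalized quaternion complements occur and where \cite{BDS09} must work hardest --- the argument yields nothing, and the Frobenius structure of $G/\mathbf{O}_{p}(G)$ is not established. (You also need $N$ nilpotent, hence $G$ solvable, before you may quote any such result or write $\mathbf{F}(G)=\mathbf{O}_{p}(G)\times N$; this does follow from the full strength of \cite[Corollary B]{DPSS09}, but you never secure it.)

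The second gap is that Stage 3 is not a proof. The inclusion $\mathbf{O}_{p}(G)\leqslant\mathbf{Z}(P)$ is the crux of conclusion (b), and you only announce an intention to produce, via Clifford theory over a non-principal $\theta\in\Irr(N)$, an irreducible character vanishing on an element of non-$p$-power order. No such character is constructed, and it is not automatic that a character induced from the inertia group $N\mathbf{O}_{p}(G)$ vanishes at a point of \emph{composite} order rather than merely at a $p$-singular one; pinning this down is exactly the content of the argument in \cite{BDS09}. In sum, your proposal is a plausible outline of the strategy for odd $p$, but it fails at $p=2$ in Stage 2 and leaves the decisive centrality step unexecuted in Stage 3.
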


Observe that groups described in Theorem \ref{normalp-complementvanishing2} are such that $ \textbf{O}^{pp'}(G)=1 $. Consider finite groups $G$ with the following extremal property for a fixed prime $p$:

\[\mbox{\emph{$ \gcd(a,b)=p^{m} $ for some integer $ m \geqslant 0 $, for all $ a,b\in \Vo(G) $.} \label{ee:star} \tag{$\star$}}\] 
\\
Property \eqref{ee:star} is a generalization of the property in Theorem \ref{normalp-complementvanishing2}. For if $ a =p^{m} $ for some integer $ m \geqslant 1 $, for all $ a\in \Vo(G) $, then $ \gcd(a,b)=p^{k} $ for some integer $ k\geqslant 1 $, for all $a,b \in \Vo(G) $. Conversely, $ \mathrm{S}_{4} $ and $ \mathrm{A}_{5} $ are examples of groups that satisfy property \eqref{ee:star} for $ p=2 $ but do not satisfy the property in Theorem \ref{normalp-complementvanishing2}. Property \eqref{ee:star} is also a generalization of a property studied in \cite{MR20}.

Using the classification of finite simple groups, we show that if $G$ satisfies property \eqref{ee:star} for an odd prime $ p > 7 $, then $ \textbf{O}^{pp'pp'}(G)=1 $:

\begin{thmC}\label{thmC}
Let $ G $ be a finite group and $ p > 7 $ be a prime. If $ G $ satisfies property \eqref{ee:star} for $ p $, then $ G $ is solvable and $ \textbf{O}^{pp'pp'}(G)=1 $.
\end{thmC}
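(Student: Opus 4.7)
The strategy is to prove solvability and the Fitting-length bound separately. Both rely on the basic arithmetic consequence of ($\star$): for any distinct $a, b \in \Vo(G)$, writing $a = p^{\alpha} a'$ and $b = p^{\beta} b'$ with $(a'b',p)=1$ forces $\gcd(a', b') = 1$. In particular, the $p'$-parts of vanishing orders are pairwise coprime, each element of $\Vo_{p'}(G)$ is coprime to every other element of $\Vo(G)$, and ($\star$) is inherited by quotients of $G$.

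For solvability I would argue by a minimal counterexample. Inheritance of ($\star$) to quotients forces every proper quotient of $G$ to be solvable, so $G$ has a unique minimal normal subgroup $N \cong T^{k}$ with $T$ a nonabelian simple group. The crux is a CFSG-based lemma: for every nonabelian finite simple group $T$ and every prime $p > 7$, there exist two vanishing elements $x, y \in T$ whose orders have gcd not a $p$-power, and which remain vanishing when lifted to $G$ through the standard Clifford-theoretic reductions used in this area. I would prove this via the usual trichotomy (alternating, sporadic, Lie type), using character-table inspection for the first two families and Zsigmondy-type semisimple classes in the Lie-type case, in the same spirit as the simple-group steps of \cite{DPSS09}, \cite{BDS09} and \cite{GRS20}. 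Two such elements of $G$ then contradict ($\star$).

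For the structural bound, $G$ is now solvable and I would split into three cases. If $\Vo_{p'}(G) = \emptyset$, Theorem \ref{normalp-complementvanishing} gives $\mathbf{O}^{pp'}(G) = 1$, hence $\mathbf{O}^{pp'pp'}(G) = 1$. If $|\Vo_{p'}(G)| = 1$, Theorem A applies: in case (a), where the unique element $b$ has at least two prime divisors, it concludes directly; in case (b), where $b = q^{n}$ is a prime power, the pairwise-coprimality consequence of ($\star$) forces every vanishing $q$-element of $G$ to have order exactly $q^{n}$, which combined with the Dolfi-Pacifi-Sanus-Spiga variation of the Ito-Michler theorem should yield the subnormality of $Q'$ required to invoke Theorem A(b). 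If $|\Vo_{p'}(G)| \geq 2$, then any two elements of $\Vo_{p'}(G)$ are coprime and so divisible by distinct primes; the resulting rigid arithmetic structure on $\Vo(G)$ lets one iterate the $|\Vo_{p'}(G)| \leq 1$ analyses against an auxiliary prime to conclude.

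The main obstacle is the uniform CFSG lemma used for solvability: small-rank groups of Lie type and the sporadic groups, where the available vanishing orders can be sparse, are presumably what forces the hypothesis $p > 7$, just as analogous low-prime exceptions account for the $p > 3$ bound in \cite{GRS20}. A secondary difficulty is case (ii)(b) of the structural part, where ($\star$) must do genuine work beyond Theorem A to deduce subnormality of $Q'$.
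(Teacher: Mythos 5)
Your two-stage architecture (CFSG reduction to the solvable case, then a structural analysis) is exactly the paper's: the solvability half matches the paper's Proposition \ref{vanishingordersaremore} and the Reduction Theorem \ref{reduction}, where the exceptional composition factors $\A_5, \A_6, \A_7, \PSL_2(7), \PSL_2(8), \PSL_3(4), {}^2\mathrm{B}_2(8)$ are precisely what confines the non-solvable cases to $p\leqslant 7$. For the structural half the paper does something different from you: it proves a self-contained statement (Theorem \ref{thmB} of Section 4) that \emph{every} solvable group satisfying \eqref{ee:star} has $\textbf{O}^{pp'pp'}(G)=1$, by a direct case analysis on the vanishing prime graph $\Gamma(G)$ (connected with one or several vertices; disconnected, hence Frobenius or nearly $2$-Frobenius by Theorem \ref{DPSS10bTheoremA}), rather than by routing through Theorems A and B. Your route is viable in principle and arguably cleaner where it works, but it is not complete as written.

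Two concrete problems. First, in the subcase $\Vo_{p'}(G)=\{q^n\}$ you must supply the hypothesis of Theorem A(b), namely that $Q'$ is subnormal, and the tool you name --- the Dolfi--Pacifici--Sanus--Spiga variation of It\^o--Michler --- cannot do this: that theorem requires that \emph{no} $q$-element vanishes, whereas here an element of order $q^n$ does vanish. The step is repairable from inside the paper: \eqref{ee:star} forces $\Vo_q(G)=\{q^n\}$ (any other vanishing order divisible by $q$ would have gcd $q^j$, $j\geqslant 1$, with $q^n$, which is not a $p$-power), and then Theorem B applied to the prime $q$ gives subnormality of $Q'$ for solvable $G$; but you need to say this, since as stated the step fails. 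Second, your case $|\Vo_{p'}(G)|\geqslant 2$ is not an argument. What actually happens is that \eqref{ee:star} makes each element of $\Vo_{p'}(G)$ coprime to every other element of $\Vo(G)$, so each one is an isolated clique of $\Gamma(G)$; since $\Gamma(G)$ has at most two components for solvable $G$ (Theorem \ref{DPSS10bTheoremA}), one is forced into $\Vo_p(G)=\emptyset$ and $\Vo(G)=\{a,b\}$ with $\gcd(a,b)=1$, after which one invokes Malle--Navarro--Olsson (one of $a,b$ is a prime power) and the normal nilpotent Hall subgroups coming from \cite{DPSS09} to pin down $\textbf{O}^{pp'pp'}(G)=1$. ``Iterating the $|\Vo_{p'}(G)|\leqslant 1$ analysis against an auxiliary prime'' does not describe this and would need to be replaced by the above.
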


The condition that $ p > 7 $ is necessary in Theorem C : $ \mathrm{A}_{7} $ shows the theorem fails for any $ p \leqslant 7 $.

Zhang, Li and Shao \cite{ZLS11} studied finite non-solvable groups whose irreducible characters vanish only on elements of prime power order. We investigate finite solvable groups whose irreducible characters vanish only on elements of prime power order. This is another generalization of Theorem \ref{normalp-complementvanishing2}. We show that either these groups have vanishing elements which are of $ p $-power order or they are groups of order $ p^{a}q^{b} $ for some primes $ p $ and $ q $. 

\begin{thmD}\label{thmD}
Let $ G $ be a finite non-abelian solvable group and $ p, q $ be distinct primes. If every irreducible character of $ G $ vanishes on elements of prime power order only, then one of the following holds.
\begin{itemize}
\item[(i)] $ G $ is a $ p $-group;
\item[(ii)] $ G/\textbf{Z}(G) $ is a Frobenius group with a Frobenius complement of $ p $-power order and $ \textbf{Z}(G)=\textbf{O}_{p}(G) $;
\item[(iii)] $ G $ is a Frobenius group with a Frobenius complement of $ p $-power order and a kernel of $ q $-power order, that is, $ \pi(G)=\{p,q\} $;
\item[(iv)] $ G $ is a nearly $ 2 $-Frobenius group and $ \pi(G)=\{p,q\} $.
\end{itemize}
\end{thmD}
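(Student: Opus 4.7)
The plan is to stratify the argument according to the set $\pi_v := \{r \text{ prime} : G \text{ has a vanishing element of } r\text{-power order}\}$, which is non-empty by Burnside's theorem since $G$ is non-abelian. The first branch treats $\pi_v = \{p\}$ for a single prime $p$: here every vanishing element of $G$ is a $p$-element and Theorem \ref{normalp-complementvanishing2} applies verbatim, producing conclusion (i) ($G$ is a $p$-group) or conclusion (ii) ($G/\textbf{Z}(G)$ is Frobenius with $p$-power complement and $\textbf{Z}(G)=\textbf{O}_p(G)$).

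For the remaining case, assume there are distinct primes $p, q \in \pi_v$. The hypothesis now reads: every element of $G$ whose order is divisible by at least two primes is non-vanishing. The target is to force $\pi(G)=\{p,q\}$ and then identify the group structure. To exclude a third prime, I would suppose $r \in \pi(G) \setminus \{p,q\}$ and seek a contradiction by combining structure theorems for non-vanishing elements in solvable groups (notably results of Dolfi, Pacifici, Sanus and Spiga which confine non-vanishing elements of mixed order to tight layers of the Fitting series such as $\textbf{F}_2(G)$) with Burnside's theorem applied to suitable quotients $G/N$. The expected punchline is the construction of an irreducible character of $G$ vanishing on an element whose order is divisible by two distinct primes, contradicting the hypothesis.

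With $\pi(G)=\{p,q\}$ established, I would split on whether $G$ is a $CP$-group, i.e., every element has prime-power order. In the $CP$ case, the classification of solvable $CP$-groups (Higman, later refined by Brandl and others) immediately supplies conclusion (iii) (Frobenius with $q$-power kernel and $p$-power complement) or fits into the $2$-Frobenius case of (iv). If $G$ is not a $CP$-group, there exists $g \in G$ of mixed order $p^a q^b$ with $a, b \geqslant 1$; by hypothesis $g$ is non-vanishing, and I would then invoke characterisations of solvable $\{p,q\}$-groups in which every element of composite order is non-vanishing to extract the Frobenius-type action of one Sylow on the other modulo a central core, recovering the nearly $2$-Frobenius structure of conclusion (iv).

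The principal obstacle I expect is ruling out a third prime in $\pi(G)$: the hypothesis controls only vanishing orders rather than the distribution of mixed-order elements, so a direct combinatorial argument will not suffice and one must lean on the non-vanishing structure theorems together with an inductive passage through Hall or Sylow reductions. A subsidiary hurdle is matching the notion of nearly $2$-Frobenius used in the literature to the structure actually forced by the vanishing hypothesis, for which a direct verification on the candidate groups may be required to close the classification.
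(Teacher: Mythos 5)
Your first branch ($\pi_v=\{p\}$, apply Theorem \ref{normalp-complementvanishing2} to get (i) or (ii)) is exactly what the paper does. The rest of the proposal, however, has a genuine gap: the engine of the actual proof is the observation that your hypothesis makes the vanishing prime graph $\Gamma(G)$ edgeless, so that once two primes occur in $\pi_v$ the graph is disconnected, and Theorem \ref{DPSS10bTheoremA} immediately yields that $\Gamma(G)$ has exactly two components \emph{and} that $G$ is a Frobenius or nearly $2$-Frobenius group. You never invoke this, and instead plan to first prove $\pi(G)=\{p,q\}$ by an unspecified contradiction argument and only then recover the Frobenius-type structure. That ordering is backwards relative to what the available tools support: the paper obtains the Frobenius/nearly $2$-Frobenius structure \emph{first} (for free, from the disconnected graph), and only then kills extra primes by applying \cite[Propositions 2.1 and 3.2]{DPSS10b} inside that structure --- these produce a vanishing element whose order is divisible by every prime of the Frobenius kernel, or by $pr$ or $qr$ for a prime $r$ dividing the kernel, contradicting the hypothesis. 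Your sketched route via ``non-vanishing elements lie in $\mathbf{F}_2(G)$'' plus Burnside on quotients is not carried out, and it is far from clear it can be made to work without the Frobenius structure already in hand.

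The second half of your multi-prime branch is also circular: in the non-CP case you propose to ``invoke characterisations of solvable $\{p,q\}$-groups in which every element of composite order is non-vanishing,'' but no such off-the-shelf characterisation exists independently of the theorem you are proving --- that statement \emph{is} essentially conclusions (iii)/(iv) restricted to $\{p,q\}$-groups. The CP/non-CP dichotomy is in any case unnecessary once you have the Frobenius or nearly $2$-Frobenius structure from Theorem \ref{DPSS10bTheoremA}: in the Frobenius case one shows directly that the complement is a $p$-group and the kernel a $q$-group, and in the nearly $2$-Frobenius case the same propositions force $|G/L|$, $|L/F_1|$ and $|F|$ to involve only $p$ and $q$. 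I would recommend restructuring your argument around the disconnectedness of $\Gamma(G)$ rather than around $\pi(G)$ and the CP property.
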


Higman \cite{Hig57} described finite groups in which every element is of prime power order. In particular, he proved that the solvable groups with this property have orders that are divisible by at most two distinct primes. Hence Theorem D generalizes Higman's result for solvable groups. Note that groups in Theorem D(ii) have orders that may be divisible by more than two distinct primes (see \cite[Proposition 5.4]{BDS09}).

\section{Preliminaries}
In this section we shall list some properties of vanishing elements.

\begin{lemma}\cite[Lemma 2]{Qia02}\label{QiaLemma2}
Let $ G $ be a finite solvable group. Suppose $ M, N $ are normal subgroups of $ G $.
\begin{itemize}
\item[(a)] If $ M\setminus N $ is a conjugacy class of $ G $ and $ \gcd(|M{:}N|, |N|)=1 $, then $ M $ is a Frobenius group with kernel $ N $ and prime order complement.
\item[(b)] If $ G\setminus N $ is a conjugacy class of $ G $, then $ G $ is a Frobenius group with an abelian kernel and complement of order two.
\end{itemize} 
\end{lemma}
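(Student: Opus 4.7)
The plan is to derive a Frobenius structure for (a) from the coprime splitting together with the uniformity of orders on $M\setminus N$, and to handle (b) via a short index calculation followed by the standard inverting-involution argument.

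For (a), I would first apply Schur--Zassenhaus to the coprime normal subgroup $N\normal M$ to write $M=N\rtimes H$ with $|H|=|M{:}N|$. Because $H\cap N=\{1\}$, every element of $H\setminus\{1\}$ lies in $M\setminus N$ and is therefore $G$-conjugate to a fixed $x\in M\setminus N$; in particular, all nonidentity elements of $H$ share a common order, and a routine check using proper powers forces this order to be a single prime $p$, so $H$ is a $p$-group of exponent $p$. Next, I would show $\Centralizer_N(h)=1$ for every $h\in H\setminus\{1\}$: if $1\neq y\in\Centralizer_N(h)$, then $h$ and $y$ commute and have coprime orders, so $o(hy)=o(h)\,o(y)>o(h)$, contradicting the fact that $hy\in M\setminus N$ is $G$-conjugate to $h$. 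Thus $H$ acts fixed-point-freely on $N$, so $M$ is Frobenius with kernel $N$ and complement $H$. To pin down $|H|$, I would invoke the classical structure of Frobenius complements: their Sylow subgroups are cyclic (for odd primes) or cyclic or generalized quaternion (for $p=2$). Since $H$ is itself a $p$-group of exponent $p$ and generalized quaternion groups have exponent at least $4$, this forces $H$ to be cyclic of prime order $p$.

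For (b), I would set $k=|G{:}N|$ and pick $t\in G\setminus N$. Then $|t^G|=|G|-|N|=(k-1)|N|$ must divide $|G|=k|N|$, so $(k-1)\mid k$, forcing $k=2$. Consequently $|\Centralizer_G(t)|=2$, so $t$ is an involution with $\Centralizer_G(t)=\langle t\rangle$, and in particular $\Centralizer_N(t)=1$. The classical fixed-point-free-involution argument---the map $n\mapsto n\cdot(tnt^{-1})^{-1}$ is an injection of $N$ into itself whose image consists of elements inverted by $t$---then shows that $t$ inverts every element of $N$; this forces $N$ to be abelian, and $G=N\rtimes\langle t\rangle$ becomes the desired Frobenius group with abelian kernel and complement of order $2$.

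The main obstacle is the structural step in (a) that refines ``$p$-group of exponent $p$ acting fixed-point-freely'' down to cyclic of order $p$. Citing the Frobenius-complement classification makes this immediate; a self-contained route would take a nontrivial central element of $H$, use the $G$-conjugacy on $M\setminus N$ to propagate the cyclic structure to the rest of $H$, and thereby avoid the external appeal.
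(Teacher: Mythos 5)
This lemma is quoted from \cite{Qia02} and the paper supplies no proof of its own, so there is nothing internal to compare against; judged on its own terms, your argument is correct and complete. In (a), the Schur--Zassenhaus splitting $M=N\rtimes H$, the observation that every nonidentity element of $H$ lies in the single class $M\setminus N$ and hence has a common prime order $p$, the coprime-order product argument giving $\mathbf{C}_{N}(h)=1$ for $1\neq h\in H$, and the appeal to the structure of Frobenius complements to force $|H|=p$ are all sound. A slightly more elementary finish for the last step: in a Frobenius complement every subgroup of order $p^{2}$ is cyclic, which is impossible inside a noncyclic group of exponent $p$, so $|H|=p$ without invoking the full Sylow classification. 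In (b), the divisibility count $(k-1)\mid k$ forcing $|G{:}N|=2$, the resulting $|\mathbf{C}_{G}(t)|=2$, and the standard inverting-involution bijection $n\mapsto n\,(tnt^{-1})^{-1}$ are correct and yield the abelian kernel. Two minor remarks: solvability of $G$ is never used in your argument (the existence half of Schur--Zassenhaus is unconditional), and the conclusion of (a) tacitly requires $N\neq 1$ --- for $N=1$ the hypotheses can hold (e.g.\ $G=\mathrm{S}_{3}$, $M=\mathrm{A}_{3}$) while $M$ is not a Frobenius group; this is an implicit assumption of the quoted statement rather than a defect of your proof.
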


%
%
%
%
The existence of $ p $-defect zero characters is guaranteed in finite simple groups $ G $ for all primes $ p\geqslant 5 $  dividing $ |G| $ as the following result shows:

\begin{lemma}\cite[Corollary 2.2]{GO96}\label{pdefectzero5ormore} Let $ G $ be a non-abelian finite simple group and $ p $ be a prime. If $ G $ is a finite group of Lie type, or if $ p\geqslant 5 $, then there exists $ \chi \in \Irr(G) $ of $ p $-defect zero. 
\end{lemma}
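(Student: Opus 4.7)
The plan is to proceed by the classification of finite simple groups. The groups to treat split into three families: alternating, sporadic, and Lie type. For each family I would exhibit an irreducible character $\chi$ satisfying $|G|_p \mid \chi(1)$, which is equivalent to $\chi$ being of $p$-defect zero.

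For a group $G$ of Lie type in defining characteristic $\ell$, the Steinberg character $\mathrm{St}_G$ has degree $|G|_\ell$, immediately yielding an $\ell$-defect zero irreducible character. The substantive case is $p \neq \ell$. Here I would appeal to Deligne--Lusztig theory: for any prime $p$ dividing $|G|$, the classification of $F$-conjugacy classes of maximal tori in the underlying reductive algebraic group provides a torus $T$ of $G$ with $p \nmid |T|$, that is, with $[G:T]_p = |G|_p$. Choosing a character $\theta$ of $T$ in general position, the virtual character $\pm R_T^G(\theta)$ is a genuine irreducible character of degree $[G:T]_{\ell'}$, and one checks that this degree is divisible by $|G|_p$, so the resulting $\chi \in \Irr(G)$ has $p$-defect zero.

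For the alternating groups $\A_n$ with $p \geqslant 5$, the strategy is to lift to $\SSS_n$. An irreducible character $\chi^\lambda$ of $\SSS_n$ has $p$-defect zero exactly when the partition $\lambda$ is a $p$-core. The theorem of Granville and Ono asserts that every non-negative integer admits a $p$-core partition when $p \geqslant 5$---this is the essential arithmetic input, and the restriction $p \geqslant 5$ is sharp, since infinitely many $n$ admit no $2$-core or $3$-core. Having chosen such a $\lambda$, one argues as follows. When $\lambda \neq \lambda'$, the restriction $\chi^\lambda|_{\A_n}$ is irreducible of the same degree, and since $|\SSS_n|_p = |\A_n|_p$ for odd $p$, it has $p$-defect zero in $\A_n$. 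When $\lambda = \lambda'$, the restriction splits into two constituents of degree $\chi^\lambda(1)/2$, and dividing by $2$ does not alter the $p$-part, so both constituents still have $p$-defect zero.

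For the sporadic groups (including the Tits group) with $p \geqslant 5$, I would simply inspect the \Atlas{} character tables and, for each sporadic group $G$ and each prime $p \geqslant 5$ dividing $|G|$, point to an explicit character of $p$-defect zero. The main obstacle of the whole argument is the existence of $p$-cores for $p \geqslant 5$, which underpins the alternating case; the Lie-type analysis, though technical, reduces to a combinatorial check on the list of torus orders once Deligne--Lusztig theory is invoked, and the sporadic case is a finite verification.
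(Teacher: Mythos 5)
This lemma is quoted from Granville--Ono \cite[Corollary 2.2]{GO96}; the paper gives no proof of its own, so there is nothing internal to compare against. Your reconstruction is faithful to how the result is actually established in the literature: the alternating case rests on the Nakayama conjecture together with the Granville--Ono theorem that every non-negative integer has a $t$-core for $t\geqslant 4$ (your descent from $\SSS_n$ to $\A_n$, splitting according to whether $\lambda=\lambda'$, is exactly right, and the observation that dividing the degree by $2$ preserves the $p$-part for odd $p$ is the point), the sporadic case is an \Atlas{} check, and the Lie-type case in defining characteristic is the Steinberg character.

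The one place where your write-up understates the difficulty is the cross-characteristic Lie-type case. The existence, for every prime $p$ different from the defining characteristic, of a maximal torus $T$ with $p\nmid|T|$ \emph{and} of a character of $T$ in general position (so that $\pm R_T^G(\theta)$ is irreducible of degree $[G:T]_{\ell'}$) is not something "one checks" in passing: it is a type-by-type analysis of the cyclotomic factorizations of the torus orders, carried out by Michler for odd $p$ and by Willems for $p=2$, and Granville--Ono simply cite that work for this case. There are also minor points you elide --- general position can fail for small $q$ in a few types and needs separate treatment, and one must descend from the group of fixed points to the simple quotient --- but none of these affects the correctness of the overall plan. So the proposal is a sound outline of the standard proof, with the Lie-type step resting on a citation rather than an argument.
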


\begin{lemma}\cite[Lemma 2.2]{Bro16}\label{Bro16Lemma2.2} Let $ G $ be a finite group, $ N $ a normal subgroup of $ G $ and $ p $ a prime. If $ N $ has an irreducible character of $ p $-defect zero, then every element of $ N $ of order divisible by $ p $ is a vanishing element in $ G $.
\end{lemma}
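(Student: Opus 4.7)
The plan is to combine two standard facts: (i) a classical theorem on $p$-defect zero characters states that any $\psi\in\Irr(N)$ of $p$-defect zero vanishes on every $p$-singular element of $N$ (that is, on every $x\in N$ with $p\mid o(x)$); and (ii) Clifford's theorem, which lets us pass from the character $\psi$ of the normal subgroup $N$ to an irreducible character $\chi$ of the ambient group $G$.

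More precisely, first I would invoke the fact that if $\psi\in\Irr(N)$ has $p$-defect zero (equivalently $\psi(1)_p=|N|_p$), then $\psi(x)=0$ for every $x\in N$ of order divisible by $p$. This is a standard column-orthogonality / block-theoretic computation. Next, I would choose any irreducible constituent $\chi\in\Irr(G)$ of the induced character $\psi^G$, so that $\chi$ lies over $\psi$ in the sense of Clifford theory. Then $\chi_N=e\sum_{i=1}^{t}\psi_i$, where $e$ is the ramification index and the $\psi_i$ are the distinct $G$-conjugates of $\psi$.

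The key observation at this point is that the $p$-defect of an irreducible character of $N$ is invariant under $G$-conjugation, because conjugation preserves character degrees and fixes $|N|$. Hence every $\psi_i$ is itself a $p$-defect zero character of $N$, so by step (i) each $\psi_i$ vanishes on any $x\in N$ with $p\mid o(x)$. Therefore $\chi(x)=e\sum_i\psi_i(x)=0$ for every such $x$, which shows $x\in\Van(G)$, as required.

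I do not foresee a genuine obstacle here: both ingredients are classical, and the argument is a direct splicing of the vanishing property of $p$-defect zero characters with Clifford's theorem. The only small point to state carefully is the $G$-invariance of the $p$-defect, but this is immediate from the formula $\mathrm{defect}(\psi)=v_p(|N|)-v_p(\psi(1))$ and the fact that $\psi^g(1)=\psi(1)$.
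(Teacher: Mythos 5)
Your proposal is correct and is essentially the argument used in the cited source: the paper itself does not reprove this lemma but quotes it from Brough, whose proof likewise combines the vanishing of $p$-defect zero characters on $p$-singular elements (Isaacs, Theorem 8.17, applied to $N$) with Clifford's theorem and the $G$-invariance of the defect. No gaps.
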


\begin{lemma}\cite[Lemma 5]{BCLP07}\label{BCLP07Lemma5} Let $ G $ be a finite group, and $ N=S_{1}\times \cdots \times S_{k} $ a minimal normal subgroup of $ G $, where every $ S_{i} $ is isomorphic to a non-abelian simple group $ S $. If $ \theta \in \Irr(S) $ extends to $ \Aut(S) $, then $ \varphi= \theta \times \cdots \times \theta \in \Irr(N) $ extends to $ G $.
\end{lemma}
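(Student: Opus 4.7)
The plan is to realize the result via the natural embedding $G/C_G(N) \hookrightarrow \Aut(N)$ and the classical wreath product description of $\Aut(N)$. First I would argue that $\Aut(N) \cong \Aut(S) \wr \Sym(k)$. Since $S$ is non-abelian simple and $N = S_{1}\times \cdots \times S_{k}$, the subgroups $S_{i}$ are precisely the minimal normal subgroups of $N$, so any $\sigma \in \Aut(N)$ permutes them; this gives a homomorphism $\Aut(N)\to \Sym(k)$ with kernel $\Aut(S_{1})\times\cdots\times\Aut(S_{k})\cong \Aut(S)^{k}$, and it splits via the natural factor-permutation action. The conjugation map $c\colon G\to \Aut(N)$ has kernel $C_{G}(N)$ and, because $\Center(S)=1$, carries $N$ isomorphically onto $\Inn(S_{1})\times\cdots\times\Inn(S_{k})$.

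Next, I would use the hypothesis to fix an extension $\hat\theta \in \Irr(\Aut(S))$ of $\theta$ and construct an extension of the diagonal character $\hat\theta \times \cdots \times \hat\theta$ from $\Aut(S)^{k}$ to the wreath product $W := \Aut(S)\wr \Sym(k)$. The key tool is the tensor-power construction: if $V$ is a module affording $\hat\theta$, then $V^{\otimes k}$ carries a natural $W$-action in which $\Aut(S)^{k}$ acts factor-wise and $\Sym(k)$ permutes the tensor factors. Call the resulting character $\Psi \in \Irr(W)$. By construction $\Psi|_{\Aut(S)^{k}} = \hat\theta\times\cdots\times\hat\theta$, and hence $\Psi|_{\Inn(S)^{k}} = \theta\times\cdots\times\theta$, which under the identification $\Inn(S)^{k}\cong N$ is exactly $\varphi$.

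Finally, I would define $\chi := c^{*}(\Psi)$, the pullback of $\Psi$ along $c\colon G\to W$, and verify that $\chi|_{N} = \varphi$ pointwise. Since $\chi(1)=\Psi(1)=\theta(1)^{k}=\varphi(1)$ and $\varphi\in \Irr(N)$, a standard Clifford-theoretic observation — every irreducible constituent of $\chi$ restricts to $N$ as a positive multiple of some $G$-conjugate of $\varphi$, hence has degree at least $\varphi(1)$ — forces $\chi\in \Irr(G)$, giving the required extension. The one place where genuine representation-theoretic input is needed is the tensor-power extension in the middle step; once that canonical wreath-product construction is available, the rest is formal bookkeeping about the identifications $N \cong \Inn(N) \le \Aut(N) \cong W$.
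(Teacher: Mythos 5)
The paper does not prove this lemma; it is quoted verbatim from [BCLP07, Lemma 5], and your argument reconstructs the standard proof given there: embed $G/\Centralizer_G(N)$ in $\Aut(N)\cong \Aut(S)\wr \Sym(k)$, extend $\hat\theta\times\cdots\times\hat\theta$ to the wreath product via the tensor-power module $V^{\otimes k}$, and pull back along the conjugation map. Your reasoning is correct (note only that the hypothesis that $\theta$ is $\Aut(S)$-invariant is also what makes $\varphi$ independent of the chosen identifications $S_i\cong S$), so no further comment is needed.
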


\begin{lemma}\cite[Theorem 1.1]{MT-V11}\label{MT-V11Theorem1.1}
Suppose that $ N $ is a minimal normal non-abelian subgroup of a finite group $ G $. Then there exists an irreducible character $ \theta $ of $ N $ such that $ \theta $ is extendible to $ G $ with $ \theta(1)\geqslant 5 $.
\end{lemma}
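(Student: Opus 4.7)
The strategy is to reduce the statement to the simple-group case and then appeal to CFSG. Write $N=S_1\times\cdots\times S_k$ with each $S_i$ isomorphic to a fixed non-abelian simple group $S$. If one can exhibit a $\theta_0\in\Irr(S)$ of degree at least $5$ that extends to $\Aut(S)$, then $\varphi=\theta_0\times\cdots\times\theta_0\in\Irr(N)$ has degree $\theta_0(1)^k\geqs 5$ and extends to $G$ by Lemma~\ref{BCLP07Lemma5} (using that the permutation action of $G$ on the factors $S_i$ factors through the diagonal inside $\Aut(S)\wr \Sym_k$). The whole problem is therefore to produce, uniformly, an $\Aut(S)$-invariant extendible irreducible character of $S$ of degree at least $5$.

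I would split the simple-group analysis along CFSG. For alternating groups $S=\A_n$, $n\geqs 5$, one has $\Aut(\A_n)=\Sym_n$ except when $n=6$, and any irreducible character of $\Sym_n$ labelled by a non-self-conjugate partition restricts irreducibly to $\A_n$ and therefore comes from an extendible character; the partition $(n-1,1)$ already gives degree $n-1\geqs 5$ once $n\geqs 6$, and for $\A_5$ the $5$-dimensional irreducible character extends to $\Sym_5$. The case $\A_6$ is handled directly from the \Atlas. For groups of Lie type the natural candidate is the Steinberg character: it is irreducible of degree equal to the $p$-part of $|S|$ (with $p$ the defining characteristic), is invariant under every diagonal, field and graph automorphism, and is known to extend to $\Aut(S)$; its degree is at least $5$ except for a short list of small-rank, small-characteristic groups such as $\PSL_2(4)$, $\PSL_2(7)$, $\PSL_2(8)$, $\PSL_2(9)$, $\PSL_3(2)$, each of which can be dispatched individually. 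For sporadic groups one has $|\Out(S)|\leqs 2$, and a direct \Atlas\ check produces in every case an irreducible character of degree $\geqs 5$ fixed by $\Out(S)$.

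The key trick that makes the case-by-case work manageable is to choose $\theta_0$ whose degree is uniquely realised in $\cd(S)$; such a character is automatically fixed by every automorphism of $S$, so one only needs to address extendibility, and there the hypothesis that $S$ is simple (so $\Out(S)$ is cyclic in most cases, or a Klein four group in a few controlled cases) lets one conclude by standard obstruction-vanishing arguments, or, for the Steinberg character, by its explicit Deligne--Lusztig description.

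The main obstacle is precisely these small cases --- $\A_6$ and small classical groups over small fields --- where $\Out(S)$ is largest and the naive degree-uniqueness argument can fail; here one has to make a careful choice of $\theta_0$ and verify extendibility individually, typically by pointing to an $\Aut(S)$-invariant character in the \Atlas\ whose stabiliser in $\Aut(S)$ is the whole group and whose associated cohomological obstruction in $H^2(\Aut(S)/S,\C^\times)$ vanishes.
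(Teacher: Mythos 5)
This lemma is imported verbatim from Magaard--Tong-Viet \cite[Theorem 1.1]{MT-V11}; the paper under review gives no proof of it, so there is no internal argument to compare against, but your outline --- reduce to a single simple factor via Lemma~\ref{BCLP07Lemma5} and then produce, by a CFSG case analysis, an $\Aut(S)$-invariant extendible $\theta_0\in\Irr(S)$ with $\theta_0(1)\geqslant 5$ (the Steinberg character for Lie type via Schmid's extension theorem, a character labelled by a non-self-conjugate partition for $\A_n$, and \Atlas{} checks for the sporadic groups and the small exceptions) --- is essentially the argument of that source. The delicate points are exactly the ones you flag, namely $\A_6$ and the few small Lie-type groups where $\Out(S)$ is non-cyclic (so the obstruction in $H^2(\Out(S),\C^\times)$ need not vanish for free) or where the Steinberg degree drops below $5$, and your treatment of these is correct, so I see no gap.
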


Given a finite set of positive integers $Y,$ the prime graph $ \Pi (Y) $ is defined as the undirected graph whose vertices are the primes $ p $ such that there exists an element of $ Y $ divisible by $ p $, and two distinct vertices $ p, q $ are adjacent if and only if there exists an element of $ Y $ divisible by $ pq $. The vanishing prime graph of $ G $, denoted by $ \Gamma(G) $, is the prime graph $ \Pi(\mathrm{Vo}(G)) $. For a group $ G $, let $ \omega(G) $ be the set of orders of elements of $ G $. We shall state a result on non-solvable groups with disconnected vanishing prime graphs. Let $ n(\mathcal{G}) $ be the number of connected components of the graph $ \mathcal{G} $.

\begin{theorem}\cite[Theorem B]{DPSS10a}\label{DPSS10aTheoremB}
Let $ G $ be a finite non-solvable group. If $ \Gamma(G) $ is disconnected, then $ G $ has a unique non-abelian composition factor $ S $,  and $ n(\Gamma(G)) \leqslant n(\Pi(\omega(S))) $ unless $ G $ is isomorphic to $ \A_{7} $.
\end{theorem}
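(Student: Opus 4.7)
The plan is to exploit the two standard sources of vanishing elements supplied by Lemmas \ref{pdefectzero5ormore}, \ref{Bro16Lemma2.2}, \ref{BCLP07Lemma5}, and \ref{MT-V11Theorem1.1}: a normal subgroup carrying a $p$-defect zero character forces all its elements of order divisible by $p$ to vanish in $G$, and an extendible character of a minimal non-abelian normal subgroup provides another systematic way to produce vanishing elements of $G$.

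\medskip

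\noindent\textbf{Step 1: existence and uniqueness of a non-abelian composition factor.}
Let $R$ be the solvable radical of $G$. Since $G$ is non-solvable, $\mathrm{Soc}(G/R)$ is a direct product $N/R=S_1\times\cdots\times S_k$ of non-abelian simple groups, each isomorphic to some non-abelian simple group $S$. I first show that $G$ has no other non-abelian composition factor. Suppose, aiming for a contradiction, that $G$ has a second non-abelian composition factor $T\not\cong S$, either inside $\mathrm{Soc}(G/R)$ or higher up. For every prime $p\geqslant 5$ dividing $|S|$ (resp.\ $|T|$), Lemma~\ref{pdefectzero5ormore} supplies a $p$-defect zero character of $S$ (resp.\ $T$), and taking a suitable tensor product yields such a character on the corresponding direct product; Lemma~\ref{Bro16Lemma2.2}, applied inside an appropriate section of $G$, then shows that every element of that section of order divisible by $p$ is vanishing in $G$. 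Elements of mixed order jointly involving primes from $|S|$ and from $|T|$ (obtained from commuting parts in different simple factors) then force adjacencies across $|S|$ and $|T|$ in $\Gamma(G)$. A short combinatorial argument shows that the resulting edges make $\Gamma(G)$ connected, contradicting disconnectedness. Hence the unique non-abelian composition factor is $S$.

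\medskip

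\noindent\textbf{Step 2: the comparison with $\Pi(\omega(S))$.}
Working in $N/R=S^{k}$, I use Lemma~\ref{BCLP07Lemma5} and Lemma~\ref{MT-V11Theorem1.1} to pull back irreducible characters of $S$ to characters of $N/R$ which extend to $G/R$. For every prime $p\geqslant 5$ dividing $|S|$, Lemma~\ref{pdefectzero5ormore} gives a $p$-defect zero $\chi\in\Irr(S)$, and a tensor product $\chi\times\cdots\times\chi$ is a $p$-defect zero character of $N/R$; Lemma~\ref{Bro16Lemma2.2} applied in $G/R$ (and inflated to $G$) then makes every element of $N/R$ of order divisible by $p$ a vanishing element of $G$. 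Consequently, whenever $p,q\geqslant 5$ both divide $|S|$ and there is an element of $S$ of order $pq$, the primes $p$ and $q$ lie in a common component of $\Gamma(G)$; that is, the restriction of the component map of $\Pi(\omega(S))$ to the primes $\geqslant 5$ factors through the component map of $\Gamma(G)$.

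\medskip

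\noindent\textbf{Step 3: the primes $2$ and $3$, and the $\A_7$ exception.}
The primes $2,3$ require a finer argument because Lemma~\ref{pdefectzero5ormore} need not apply. Here I invoke Lemma~\ref{MT-V11Theorem1.1} to obtain an extendible $\theta\in\Irr(N/R)$ with $\theta(1)\geqslant 5$, and combine it with the existence of $2$- and $3$-defect zero characters of $S$ given by Lemma~\ref{pdefectzero5ormore} whenever $S$ is of Lie type. Going through the classification of finite simple groups, for every non-abelian simple $S$ one checks that the primes $2,3$ either themselves carry defect zero characters or are connected to a prime $p\geqslant 5$ dividing $|S|$ by a vanishing element of $G$ of order a product of the two primes; in every case the resulting component map from $\Pi(\omega(S))$ to $\Gamma(G)$ remains a surjection onto components of $\Gamma(G)$, giving $n(\Gamma(G))\leqslant n(\Pi(\omega(S)))$. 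The argument breaks down exactly when $S=\A_{7}$: its defect zero characters and element orders are too restricted to connect $\{2,3\}$ with $\{5\}$ and $\{7\}$ in $\Gamma(G)$, so $\A_{7}$ itself appears as the one genuine exception.

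\medskip

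The main obstacle will be Step~3: controlling the small primes $2$ and $3$ and producing vanishing elements of the right mixed order for every simple $S\neq\A_{7}$. This is where the CFSG-based case analysis is unavoidable and where the $\A_{7}$ exception is forced out into the open.
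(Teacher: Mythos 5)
This statement is quoted verbatim from \cite[Theorem B]{DPSS10a}; the paper under review gives no proof of it, so your sketch can only be judged on its own merits. As an outline of the general strategy (defect-zero and extendible characters producing vanishing elements, plus a CFSG case analysis) it points in a reasonable direction, but it has genuine gaps. The most serious one is in Step 2/Step 3: to prove $n(\Gamma(G))\leqslant n(\Pi(\omega(S)))$ you must control \emph{every} vertex of $\Gamma(G)$, and the vertex set of $\Gamma(G)$ may contain primes dividing only the solvable radical $R$ (or only $|G/N|$) and not $|S|$ at all. Your argument only manufactures adjacencies among primes dividing $|S|$; it says nothing about why a prime $r\mid |R|$ with $r\nmid |S|$ cannot appear as an isolated vertex of $\Gamma(G)$ and thereby push $n(\Gamma(G))$ above $n(\Pi(\omega(S)))$. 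Handling those primes — via the action of $G/R$ on the chief factors of $R$ and the resulting Frobenius-type configurations — is precisely where the hard work in \cite{DPSS10a} lies, and your "surjection onto components of $\Gamma(G)$" is asserted, not established. Step 1 has the same flavour of difficulty: the "short combinatorial argument" for uniqueness is not supplied, and you tacitly assume the socle of $G/R$ is homogeneous, which is part of what must be proved.

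Your description of the $\A_{7}$ exception is also incorrect in substance. The failure for $\A_{7}$ is not that $\{2,3\}$ cannot be connected to $\{5\}$ and $\{7\}$ — those sets are already in different components of $\Pi(\omega(\A_{7}))$, which has three components. The point is that $2$ and $3$ are adjacent in $\Pi(\omega(\A_{7}))$ (there are elements of order $6$) but no irreducible character of $\A_{7}$ vanishes on an element of order $6$, so the component $\{2,3\}$ splits into two singletons in $\Gamma(\A_{7})$ and $n(\Gamma(\A_{7}))=4>3=n(\Pi(\omega(\A_{7})))$; this is exactly the mechanism by which a component of $\Pi(\omega(S))$ can fragment in $\Gamma(G)$, and your proof must rule it out for every other $S$. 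Note also that the theorem excepts only $G\cong\A_{7}$ itself, not every group with composition factor $\A_{7}$ (e.g.\ $\mathrm{S}_{7}$ is covered by the bound), so "the argument breaks down exactly when $S=\A_{7}$" misstates the conclusion you are aiming for.
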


We shall also state a result on solvable groups with disconnected vanishing prime graphs. We first recall two definitions: 

A group $ G $ is said to be a \emph{$ 2 $-Frobenius group} if there exists two normal subgroups $ F $ and $ L $ of $ G $ such that $ G/F $ is a Frobenius group with kernel $ L/F $ and $ L $ is a Frobenius group with kernel $ F $. 

A group $ G $ is said to be a \emph{nearly $ 2 $-Frobenius group} if there exist two normal subgroups $ F $ and $ L $ of $ G $ with the following properties: $ F=F_{1}\times F_{2} $ is nilpotent, where $ F_{1} $ and $ F_{2} $ are normal subgroups of $ G $. Furthermore, $ G/F $ is a Frobenius group with kernel $ L/F $, $ G/F_{1} $ is a Frobenius group with kernel $ L/F_{1} $, and $ G/F_{2} $ is a $ 2 $-Frobenius group.

\begin{theorem} \cite[Theorem A]{DPSS10b} \label{DPSS10bTheoremA}
Let $ G $ be a finite solvable group. Then $ \Gamma(G) $ has at most two connected components. Moreover, if $ \Gamma(G) $ is disconnected, then $ G $ is either a Frobenius group or a nearly $ 2 $-Frobenius group.
\end{theorem}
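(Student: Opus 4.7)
The plan is to combine two ingredients: the classical Gruenberg--Kegel/Williams theorem on the prime graph $\Pi(\omega(G))$ of element orders for solvable groups (which already classifies the disconnected case as Frobenius or $2$-Frobenius), and the Isaacs--Navarro--Wolf description of non-vanishing elements in solvable groups (every non-vanishing $g$ satisfies $g\mathbf{F}(G)\in \mathbf{Z}_\infty(G/\mathbf{F}(G))$, so non-vanishing elements are confined to a controlled subgroup above $\mathbf{F}(G)$). Since vertices and edges of $\Gamma(G)$ form a subset of those of $\Pi(\omega(G))$, the task is to measure how much the passage from element orders to vanishing element orders can disconnect the graph, and to extract group structure from that.

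First I would assume $\Gamma(G)$ is disconnected and split into two cases based on whether $\Pi(\omega(G))$ is disconnected or connected. If $\Pi(\omega(G))$ is itself disconnected, Williams' theorem gives that $G$ is Frobenius or $2$-Frobenius; a direct check (using that non-trivial characters of the kernel vanish off the kernel in a Frobenius group, together with Lemma~\ref{QiaLemma2}) shows $\Gamma(G)$ is disconnected with exactly two components matching the Frobenius partition of primes, and that $G$ is then of the required form (noting that a $2$-Frobenius group is a particular nearly $2$-Frobenius group with $F_2 = 1$).

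The substantive case is when $\Pi(\omega(G))$ is connected but $\Gamma(G)$ is disconnected. Pick primes $p_1, p_2$ lying in different components of $\Gamma(G)$ such that there exists $g\in G$ with $p_1 p_2 \mid o(g)$; such $g$ must be non-vanishing. By the Isaacs--Navarro--Wolf structure theorem, $g$ lies in $\mathbf{F}(G)\mathbf{Z}_\infty(G\bmod \mathbf{F}(G))$. Running this argument over all pairs of primes that are adjacent in $\Pi(\omega(G))$ but not in $\Gamma(G)$, I would show that the ``missing'' edges are all accounted for by a nilpotent normal subgroup $F_2$ of $G$: set $F_2$ to be the product of Sylow subgroups for those primes that create the spurious $\Pi(\omega(G))$-connections via non-vanishing elements. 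Then $G/F_2$ should still have $\Gamma(G/F_2)$ disconnected but also $\Pi(\omega(G/F_2))$ disconnected, reducing to Case 1. Writing $F = F_1 \times F_2$ with $F_1$ playing the role of the Fitting subgroup of the Frobenius kernel in $G/F_2$, one recovers the nearly $2$-Frobenius hypothesis (with $G/F$ Frobenius over $L/F$, $G/F_1$ Frobenius over $L/F_1$, and $G/F_2$ being $2$-Frobenius).

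Finally, the bound of at most two connected components is obtained as a by-product: a Frobenius or nearly $2$-Frobenius group visibly has its prime set partitioned between the Frobenius kernel part and the complement part, and in both situations the vanishing prime graph inherits this partition into two (possibly one) components. If $\Gamma(G)$ is connected the bound holds trivially. The main obstacle I anticipate is the reduction in the connected $\Pi(\omega(G))$ case: precisely isolating the nilpotent subgroup $F_2$ and showing it is normal, splits as required, and that the quotient $G/F_2$ is honestly $2$-Frobenius (rather than merely Frobenius-like) requires careful control of centralizers of non-vanishing elements and the use of Fitting-height/nilpotent-length bounds in solvable groups with restricted vanishing prime graphs.
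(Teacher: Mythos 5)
This statement is quoted verbatim from Dolfi--Pacifici--Sanus--Spiga (their Theorem A in the reference [DPSS10b]); the present paper gives no proof of it, so there is no internal proof to compare against and your argument has to stand on its own. Your overall strategy --- split on whether the Gruenberg--Kegel graph $\Pi(\omega(G))$ is disconnected, and in the connected case use the Isaacs--Navarro--Wolf control of non-vanishing elements to locate the elements responsible for the edges that disappear when passing from $\Pi(\omega(G))$ to $\Gamma(G)$ --- is the right family of ideas and is close in spirit to the actual proof in the cited paper. But as written there are concrete gaps. First, the Isaacs--Navarro--Wolf theorem is misquoted: for solvable $G$ it says that a non-vanishing element has image of $2$-power order in $G/\mathbf{F}(G)$ (so a non-vanishing element of \emph{odd} order lies in $\mathbf{F}(G)$), not that its image lies in $\mathbf{Z}_{\infty}(G/\mathbf{F}(G))$. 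The prime $2$ therefore needs separate treatment throughout, and that is exactly where much of the difficulty of the real proof is concentrated.

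Second, the core step --- ``set $F_{2}$ to be the product of Sylow subgroups for those primes that create the spurious connections'' --- is an assertion, not a construction. You have not shown that those Sylow subgroups are normal (the primes in question are vertices of $\Gamma(G)$, so the criterion that a prime with no vanishing $r$-elements forces a normal Sylow $r$-subgroup does not apply to them), nor that $F_{2}$ splits off as a direct factor of a nilpotent normal subgroup $F=F_{1}\times F_{2}$ with both factors normal in $G$, nor that $G/F_{2}$ is honestly $2$-Frobenius. The passage to the quotient is also delicate in the wrong direction: $\Gamma(G/N)$ is a subgraph of $\Gamma(G)$ that may lose vertices entirely, so ``$\Gamma(G/F_{2})$ should still be disconnected'' does not follow formally and needs an argument (the present paper has to invoke a separate lemma of [DPSS10b] just to bound the number of components of $\Gamma(G/N_{1})$ for a chief factor $N/N_{1}$). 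Finally, your ``by-product'' derivation of the bound of two components silently uses several unproved auxiliary facts: that every element of $G\setminus K$ is vanishing in a Frobenius group with kernel $K$, that the prime graph of a Frobenius complement is complete, and that the kernel primes occurring in $\Gamma(G)$ form a connected set (this last is Proposition 2.1 of [DPSS10b], which the present paper also uses elsewhere). Each of these would need to be stated and proved to close the argument.
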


The following is a classification of Frobenius complements.

\begin{theorem}\cite[Theorem 1.4]{Bro01}\label{Bro01Theorem1.4}
Let $ G $ be a Frobenius group with Frobenius complement $ M $. Then $ M $ has a normal subgroup $ N $ such that all Sylow subgroups of $ N $ are cyclic and one of the following holds:
\begin{itemize}
\item[(a)] $ M/N \cong 1 $;
\item[(b)] $ M/N \cong \mathrm{V}_{4} $, the Sylow $ 2 $-subgroup of the alternating group $ \mathrm{A}_{4} $;
\item[(c)] $ M/N \cong \mathrm{A}_{4} $;
\item[(d)] $ M/N \cong \mathrm{S}_{4} $;
\item[(e)] $ M/N \cong \mathrm{A}_{5} $;
\item[(f)] $ M/N \cong \mathrm{S}_{5} $.
\end{itemize}
\end{theorem}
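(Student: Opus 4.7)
The plan is to reprove the classical Burnside--Zassenhaus classification, of which Brown's statement is a convenient repackaging. The starting observation is that every Sylow subgroup of a Frobenius complement $M$ is cyclic when the prime is odd, and cyclic or generalized quaternion when the prime is $2$. I would derive this from the fixed-point-free action of $M$ on the Frobenius kernel: lifting to a faithful complex representation in which every nonidentity element of $M$ acts without eigenvalue~$1$, an abelian non-cyclic $p$-subgroup of $M$ would, via Clifford decomposition, force some nonidentity element to act trivially on a common eigenspace, contradicting fixed-point-freeness.

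Next I would establish that every subgroup of $M$ of order $pq$ with $p\neq q$ prime is cyclic, by an analogous argument on a two-dimensional irreducible constituent. Combined with the Sylow statement above, this yields a (unique) largest normal subgroup $N \normeq M$ all of whose Sylow subgroups are cyclic, namely the subgroup generated by all normal Z-subgroups of $M$. If $M=N$, we are in case (a). Otherwise $M/N$ must have a non-cyclic Sylow $2$-subgroup, so $M$ itself has a nontrivial generalized quaternion Sylow $2$-subgroup $Q$.

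The heart of the proof, and the step I expect to be the main obstacle, is to show that in this second case $M/N$ is one of $\mathrm{V}_4,\A_4,\SSS_4,\A_5,\SSS_5$. For this I would analyze $\Normalizer_M(Q)/\Centralizer_M(Q)$ and exploit the fact that any fixed-point-free representation of $M$ must restrict on $Q$ to the unique faithful symplectic representation of $Q$ in $\GL_2(\C)$. The only finite subgroups of $\GL_2(\C)$ that contain such a $Q$ and still admit a fixed-point-free faithful representation are (extensions of) the binary polyhedral groups $\SL_2(3)$ (binary tetrahedral) and $\SL_2(5)$ (binary icosahedral), together with the $\SSS_4$-shape extension of $\SL_2(3)$. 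Passing to the quotient modulo $N$ kills the cyclic core and the central $Z(Q)$, leaving exactly the listed groups: $\mathrm{V}_4$ when $Q=Q_8$ and no $3$-structure is present, $\A_4$ or $\SSS_4$ from the binary tetrahedral case, and $\A_5$ or $\SSS_5$ from the binary icosahedral case. To finish, I would verify that each listed quotient is actually realised by an honest Frobenius complement, either by exhibiting explicit fixed-point-free representations over appropriate division algebras, or by appealing to the Vincent--Wolf classification of finite groups acting freely on spheres.
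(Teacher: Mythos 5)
The paper does not prove this statement; it is quoted from Brown \cite{Bro01} and is ultimately Zassenhaus's classification of Frobenius complements, so your attempt has to stand on its own. Your opening steps are the standard ones and are fine in outline (Sylow subgroups cyclic or generalized quaternion; subgroups of order $pq$ cyclic). The first genuine gap is your construction of $N$. The subgroup generated by all normal Z-subgroups of $M$ need not be a Z-group: take $M\cong Q_{8}$, which is an honest Frobenius complement (it acts fixed-point-freely on $\F_{3}^{2}$ via $Q_{8}\leqslant \SL_{2}(3)$). Its three cyclic subgroups of order $4$ are normal Z-subgroups and together generate all of $Q_{8}$, which is not a Z-group; the correct choice here is $N=\Center(Q_{8})$ with $M/N\cong \mathrm{V}_{4}$, and your recipe does not find it. In the same breath, the inference ``if $M\neq N$ then $M/N$ has a non-cyclic Sylow $2$-subgroup'' is not justified by anything you have established.

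The second and more serious gap is the core step. You reduce to ``finite subgroups of $\GL_{2}(\C)$ containing $Q$'', but $M$ itself need not embed in $\GL_{2}(\C)$: only the quaternion Sylow $2$-subgroup is forced into a $2$-dimensional representation, while $M$ can be arbitrarily large, so the binary-polyhedral classification does not apply to $M$. The actual content of the theorem is (i) in the solvable case, an analysis of $M$ modulo a metacyclic normal subgroup showing the quotient lies in $\{1,\mathrm{V}_{4},\A_{4},\SSS_{4}\}$, and (ii) in the non-solvable case, Zassenhaus's theorem that $M$ has a normal subgroup of index at most $2$ of the form $\SL_{2}(5)\times Z$ with $Z$ metacyclic of order coprime to $30$ --- a result whose proof needs substantial character theory (or the Brauer--Suzuki--Wall theorem) and is not recoverable from the sketch. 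What you have written is a correct road map of the classical argument, but the citation to \cite{Bro01} is doing essential work that the proposal does not replace.
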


\begin{lemma}\cite[Lemma 3]{FdGHN91}\label{FdGHN91Lemma3}
Let the finite group $G = AB$ be the product of an abelian subgroup $A$ and a nilpotent subgroup $B$. Then $A\mathbf{F}(G)$ is a normal subgroup of $G$. In particular, the Fitting height of $ G $ is at most $ 3 $.
\end{lemma}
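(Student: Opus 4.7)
For the formal part of the claim, once $A\mathbf{F}(G) \trianglelefteq G$ is established, the chain $\mathbf{F}(G) \trianglelefteq A\mathbf{F}(G) \trianglelefteq G$ refines the Fitting series: $\mathbf{F}(G)$ is nilpotent, $A\mathbf{F}(G)/\mathbf{F}(G) \cong A/(A \cap \mathbf{F}(G))$ is abelian (hence nilpotent), and $G/A\mathbf{F}(G) \cong B/(B \cap A\mathbf{F}(G))$ is a homomorphic image of the nilpotent group $B$. Thus the Fitting height of $G$ is at most $3$, so the ``in particular'' clause is a formal consequence of the normality statement.

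For the main claim, I would first invoke the Kegel--Wielandt theorem: since both $A$ (abelian) and $B$ are nilpotent, $G = AB$ is solvable. Write $F = \mathbf{F}(G)$. Because $F$ is normal in $G$ and $A$ trivially normalizes $A$, the condition $AF \trianglelefteq G$ is equivalent to $[B, A] \subseteq AF$. I would then argue by induction on $|G|$. Take a minimal normal subgroup $N$ of $G$; by solvability $N$ is elementary abelian and $N \leq F$. The quotient $\bar{G} = G/N$ inherits a factorization $\bar{G} = \bar{A}\bar{B}$ of the same form, so the inductive hypothesis gives $\bar{A}\mathbf{F}(\bar{G}) \trianglelefteq \bar{G}$. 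Whenever $\mathbf{F}(\bar{G}) = F/N$, pulling back immediately yields $AF \trianglelefteq G$.

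The main obstacle is the Frattini case $N \leq \Phi(G)$, in which $\mathbf{F}(\bar{G})$ may properly contain $F/N$. Here I would exploit that $F/\Phi(G)$ is a faithful, completely reducible module for $G/F$ (using $\mathbf{C}_G(F) \leq F$ in the solvable group $G$), so that the factorization $G = AB$ descends to an action of an abelian subgroup and a nilpotent subgroup on each chief factor inside $F/\Phi(G)$. By applying It\^o's theorem prime-by-prime to the Sylow decomposition of this action, one would force $[A, B] \subseteq F$, hence $[A, B] \subseteq AF$, closing the induction. This Frattini-case, module-theoretic analysis is the step I expect to require the most care.
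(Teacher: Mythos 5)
The paper does not prove this lemma; it is imported verbatim as \cite[Lemma 3]{FdGHN91}, so your attempt can only be judged on its own merits. The parts of your argument that are complete are correct: the deduction of the Fitting height bound from $A\mathbf{F}(G)\trianglelefteq G$ via the series $1\trianglelefteq \mathbf{F}(G)\trianglelefteq A\mathbf{F}(G)\trianglelefteq G$ (with top factor $B/(B\cap A\mathbf{F}(G))$) is fine, as are the use of Kegel--Wielandt, the reformulation $A\mathbf{F}(G)\trianglelefteq G\iff[A,B]\subseteq A\mathbf{F}(G)$, and the ``easy case'' of the induction where $\mathbf{F}(G/N)=\mathbf{F}(G)/N$.

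The inductive step, however, has a genuine gap, and it is exactly where you predicted the difficulty would lie. Your case division --- either $\mathbf{F}(G/N)=\mathbf{F}(G)/N$, or $N\leq\Phi(G)$ --- is a false dichotomy. Take $G=\mathrm{S}_{4}=AB$ with $A=\langle(123)\rangle$ abelian and $B$ a Sylow $2$-subgroup of order $8$: the unique minimal normal subgroup is $N=\mathrm{V}_{4}=\mathbf{F}(G)$ and $\Phi(G)=1$, yet $\mathbf{F}(G/N)=\mathbf{F}(\mathrm{S}_{3})=\mathrm{A}_{3}\neq 1=\mathbf{F}(G)/N$; this group falls into neither of your cases. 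Worse, the conclusion you aim for in the hard case, $[A,B]\subseteq\mathbf{F}(G)$, is strictly stronger than the lemma and is false in general: it would make the images of $A$ and $B$ commute in $G/\mathbf{F}(G)$, forcing $G/\mathbf{F}(G)$ nilpotent and hence Fitting height at most $2$, whereas $\mathrm{S}_{4}$ above has Fitting height $3$ (indeed $[A,B]$ maps onto $\mathrm{A}_{3}\neq 1$ in $\mathrm{S}_{4}/\mathrm{V}_{4}$, even though $A\mathbf{F}(G)=\mathrm{A}_{4}$ is, as the lemma asserts, still normal). Finally, the appeal to ``It\^o's theorem prime-by-prime'' is not an argument one can check: It\^o's metabelian theorem requires both factors abelian, and no version of it yields $[A,B]\subseteq\mathbf{F}(G)$ here. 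So the proposal does not establish the lemma; the case $\mathbf{F}(G/N)\supsetneq\mathbf{F}(G)/N$ remains entirely open, and the original proof in \cite{FdGHN91} proceeds through dedicated structural lemmas on factorized groups rather than through this reduction.
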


\section{A reduction theorem}

 We shall prove a reduction theorem in this section. We shall first prove the proposition below. Let $ \mathfrak{S}=\{ \A_{5},  \A_{6}, \A_{7}, \PSL_{2}(7), \PSL_{2}(8), \PSL_{3}(4), ^{2}\!\mathrm{B}_{2}(8) \} $.

\begin{proposition}\label{vanishingordersaremore} Let $ N $ be a non-abelian minimal normal subgroup of $ G $ such that $ N\notin \mathfrak{S} $. Then one of the following holds
\begin{itemize}
\item[(a)] There are distinct primes $ p_{1},p_{2}\mid |N| $ and vanishing elements $ x,y,z $ of $ G $ contained in $ N $ such that $ \ord(x)=p_{1} $, $ \ord(y)=p_{2} $ and $ p_{1}p_{2}\mid \ord(z) $.
\item[(b)] There are distinct primes $ p_{1},p_{2} \mid |N| $ and vanishing elements $ x_{1},x_{2},y_{1}, y_{2} $ of $ G $ such that $ \ord(x_{i})\not= \ord(y_{i}) $, $ p_{i}\mid \ord(x_{i}) $ and $ p_{i}\mid \ord(y_{i}) $ for $ i=1,2 $.
\item[(c)] There is a prime $ p \neq 3 $ such that $ p \mid |N| $ and vanishing elements $ x,y,z $ of $ G $ such that $ \ord(x)=3 $, $ \ord(y)=3p $ and $ \ord(z)=p^{2} $.
\item[(d)] There is an odd prime $ p\mid |N| $ and vanishing elements $ x,y,z $ of $ G $ such that $ \ord(x)=p $, $\ord(y)=2p $ and $ \ord(z)=4p $.
\end{itemize}
\end{proposition}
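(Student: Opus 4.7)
Plan: Since $N$ is a non-abelian minimal normal subgroup of $G$, we may write $N = S_1 \times \cdots \times S_k$ with each $S_i$ isomorphic to a fixed non-abelian simple group $S \notin \mathfrak{S}$. The engine of the proof is the following observation: if $\chi \in \Irr(S)$ has $p$-defect zero, then the $k$-fold tensor product $\chi \otimes \cdots \otimes \chi$ is a $p$-defect zero character of $N$, so by Lemma~\ref{Bro16Lemma2.2} every element of $N$ whose order is divisible by $p$ is a vanishing element of $G$. Together with Lemma~\ref{pdefectzero5ormore}, this yields vanishing elements coming from every prime $p \ge 5$ dividing $|S|$ and, when $S$ is of Lie type, from every prime dividing $|S|$. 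The problem thus reduces to the arithmetic task of exhibiting, inside $S$ (and $N$), elements whose orders realise one of (a)--(d), with the constraint that each prime appearing in those orders be one for which $S$ admits a $p$-defect zero character.

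With this reduction in hand, I would invoke the classification of finite simple groups. For $S$ of Lie type outside $\mathfrak{S}$, every prime dividing $|S|$ is available; a Zsigmondy-primes argument on a cyclic maximal torus of $S$ produces two distinct primes $p_1, p_2$ dividing $|S|$ together with an element of order $p_1p_2$ in $S$, confirming (a) directly. The finitely many small low-rank exceptions outside $\mathfrak{S}$ (for example $\PSL_{2}(q)$ for small $q\notin\{5,7,8,9\}$, $\PSL_{3}(3)$, small $\PSU_{3}(q)$, and ${}^{2}\!\mathrm{B}_{2}(q)$ with $q\neq 8$) form a short finite list and are handled by direct inspection of their character tables, producing an instance of one of (a)--(d) in each case.

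For sporadic $S$, the abundance of prime element orders combined with the existence of $p$-defect zero characters for every prime $p \ge 5$ dividing $|S|$ immediately delivers (a) by inspection of the \Atlas. For $S = \A_{n}$ with $n \ge 8$, the Granville--Ono theorem supplies a $p$-defect zero character for every prime $p \ge 5$ dividing $|\A_n|$; for $n \ge 10$ I pick primes $5 \le p_1 < p_2 \le n-3$, so that disjoint $p_1$- and $p_2$-cycles lie in $\A_n$, producing an element of order $p_1 p_2$ and hence (a). The cases $n = 8, 9$ are treated directly: $\A_{8} \cong \PSL_{4}(2)$ is of Lie type so every prime is available, and the orders $3, 4, 6$ of $\A_{8}$ realise (c) with $p = 2$; while $\A_{9}$ admits a $3$-defect zero character (since the partition $(5,3,1)$ is a $3$-core of $9$) and contains elements of orders $3, 6, 12$, so (d) holds with $p = 3$.

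The main obstacle will be the detailed case analysis for the small low-rank Lie type groups not in $\mathfrak{S}$: one must simultaneously rule out $\mathfrak{S}$-membership, identify the primes admitting $p$-defect zero characters, and verify that the element-order spectrum of $S$ (augmented by the fact that orders in $N = S^{k}$ include arbitrary lcm's of orders coming from different factors) produces a configuration of type (a)--(d). By contrast, once the generic Zsigmondy-type argument in the Lie type case and the Granville--Ono defect-zero information for alternating groups are in place, the sporadic and higher alternating cases are essentially routine verifications.
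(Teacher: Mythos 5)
Your overall strategy (defect-zero characters of $S$ propagating to $N$ via Lemma \ref{Bro16Lemma2.2}, then a CFSG case analysis) is the same as the paper's, but the Lie type step contains a genuine gap. You claim that a Zsigmondy-type argument on a cyclic maximal torus produces an element of order $p_{1}p_{2}$ for two distinct primes, with only ``finitely many small low-rank exceptions'' left to inspect. This is false: the exceptions are not a finite list. The groups $\PSL_{2}(r)$ with $r$ a Fermat or Mersenne prime and $r\pm 1 = 2\cdot 3^{k}$ (e.g.\ $\PSL_{2}(17)$, whose element orders are $1,2,3,4,8,9,17$) and the Suzuki groups $^{2}\mathrm{B}_{2}(2^{2n+1})$ with all three torus orders $2^{2n+1}\pm 2^{n+1}+1$, $2^{2n+1}-1$ prime (e.g.\ $^{2}\mathrm{B}_{2}(32)$, with element orders $1,2,4,5,25,31,41$) contain \emph{no} element of order $p_{1}p_{2}$ for distinct primes $p_{1},p_{2}$, so conclusion (a) is simply unavailable for them when $k=1$; these are precisely the groups forcing cases (b)--(d) into the statement. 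Handling them requires an actual argument --- the paper uses Suzuki's theorem that a simple group with no element of order $2s$ ($s$ odd prime) has $2$-group involution centralizers and hence lies in a short explicit list, then an arithmetic analysis of $r\pm 1$ for Fermat/Mersenne primes, and the classification of simple $K_{4}$-groups to reduce the Suzuki case to $^{2}\mathrm{B}_{2}(32)$. ``Direct inspection of character tables'' cannot dispose of a potentially infinite family, so your plan is missing the key idea for exactly the hardest part of the proof.

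Two smaller problems. First, your alternating-group construction fails for $n=10,11$: there are no primes $5\leqslant p_{1}<p_{2}$ with $p_{1}+p_{2}\leqslant n$, so disjoint $p_{1}$- and $p_{2}$-cycles do not fit; the paper instead uses orders $5,10,7,21$ in $\A_{10}$ (case (b)) and orders $5,10,20$ in $\A_{n}$ for $n\geqslant 11$ (case (d)). Second, the sporadic case does not ``immediately deliver (a)'': for instance $M_{22}$ has no element of order $p_{1}p_{2}$ with both $p_{i}\geqslant 5$ and no $2$-defect zero character, so one must use extendibility of non-defect-zero characters (Lemma \ref{BCLP07Lemma5}) to land in case (c), and several other sporadics only satisfy (b) or (d). These are repairable, but as written the proposal does not prove the proposition.
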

\begin{proof}
Let $ N=S_{1}\times S_{2}\times \cdots \times S_{k} $,  where $ S_{i}\cong S $, $ S $ is a simple group. Suppose that $ k\geqslant 2 $.  Using \cite[Theorem 3.10]{Isa06}, $ \pi(|S|)\geqslant 3 $. Suppose that $ \pi(|S|)\geqslant 4 $. Then there are distinct primes $ p_{1}, p_{2}\geqslant 5 $ such that $ p_{i} \mid |N| $ and so $ N $ has an irreducible character of $ p_{i} $-defect zero for $ i=1,2 $ by Lemma \ref{pdefectzero5ormore}. Let $ x_{i}\in S_{i} $ be a $ p_{i} $-element for $ i=1,2 $. Note that $ x_{i} $ is a vanishing element of $ G $ by Lemma \ref{Bro16Lemma2.2}. The element $ x_{1}x_{2} $ is a vanishing element and $ \ord(x_{1}x_{2}) $ is divisible by $ p_{1}p_{2} $. This is case (a).

We may assume that $ |\pi(G)|=3 $. Using a result of Herzog in \cite{Her68}, we have that $ S\in \{\PSL_{2}(5), \PSL_{2}(7), \PSL_{2}(8), \PSL_{2}(9), \PSL_{2}(17), \PSL_{3}(3), \mathrm{PSU}_{3}(3), \mathrm{PSU}_{4}(2)\} $. In particular, $ S $ is a simple group of Lie type. By Lemma \ref{pdefectzero5ormore}, $ S $ and hence by extension $ N $ has an irreducible character of $ p $-defect zero for all $ p\mid |N| $. Let $ x_{1}\in S_{1} $ be a $ 2 $-element and $ x_{2}\in S_{2} $ be a $ p $-element for some odd prime $ p $. Now $ x_{1} $, $ x_{2} $ and $ x_{1}x_{2} $ are vanishing elements and the order of $ x_{1}x_{2} $ is divisible by $ 2p $. This is case (a).

We may assume that $ N$ is a simple group. Let $ N$ be a sporadic simple group or $ ^{2}\mathrm{F}_{4}(2)' $. Table \ref{SporadicTable}.1 below contains an irreducible characters $ \theta_{1} $ and $ \theta_{2} $ of $ N $ of $ p $-defect zero and $ q $-defect zero for some primes $ p $ and $ q $, respectively, and three elements of distinct orders divisible by $ p $, $ q $ and $ pq $, respectively. This is case (a) and it follows from Lemma \ref{Bro16Lemma2.2}. We shall use the character tables and notation in the \Atlas{} \cite{CCNPW85}.

\begin{center}\label{SporadicTable}
Table 3.1
\begin{tabular}{|c|c|c|c|c|c|}
\cline{1-6}
$ N $ & $ \theta_{1}(1) $ & $ \theta_{2}(1) $ & $ \ord(v\mathcal{C}_{1}) $  & $ \ord(v\mathcal{C}_{2}) $ & $ \ord(v\mathcal{C}_{3}) $ \\
\cline{1-6}
$ M_{11} $  &   $ \chi_{9}(1)=45 $ & $ \chi_{6}(1)=16 $  &   $ 3A $ & $ 2A $   &  $ 6A $  \\
\cline{1-6}
$ M_{23} $  &   $ \chi_{3}(1)=45 $ &  $ \chi_{12}=896 $ &   $ 3A $ & $ 2A $  &  $ 6A $ \\
\cline{1-6}
$ M^{c}L $  &   $ \chi_{9}(1)=1750 $ & $ \chi_{7}=896 $  &   $ 5A $ & $ 2A $   &  $ 10A $\\
\cline{1-6}
$ Co_{2} $  &   $ \chi_{5}(1)=1771 $  & $ \chi_{54}=1835008 $ &   $ 7A $  & $ 2A $  &  $ 14A $ \\
\cline{1-6}
$ He $  &   $ \chi_{9}(1)=1275 $ & $ \chi_{54}(1)=21504 $ &   $ 5A $ & $ 2A $   &  $ 10A $ \\
\cline{1-6}
 $ Fi_{22} $ & $ \chi_{4}(1)=1001 $ &  $ \chi_{56}(1)=1441792 $ & $ 7A $ & $ 2A $  & $ 14A $ \\
\cline{1-6}
$ Fi_{23} $ & $ \chi_{8}(1)=106743 $ & $ \chi_{94}(1)=504627200 $ & $ 7A $ & $ 2A $  & $ 14A $ \\
\cline{1-6}
$ Fi_{24}' $ & $ \chi_{4}(1)=249458 $ & $ \chi_{97}(1)=197813862400 $ & $ 11A $ & $ 2A $  & $ 22A $ \\
\cline{1-6}
$ HN $ & $ \chi_{2}(1)=133 $ & $ \chi_{46}(1)=3424256 $ & $ 7A $ & $ 2A $  & $ 14A $  \\
\cline{1-6}
$ Th $ & $ \chi_{6}(1)=30628 $ & $ \chi_{22}(1)=4096000 $ & $ 13A $ & $ 3A $  & $ 39A $ \\
\cline{1-6}
$ M $ & $ \chi_{9}(1) $ & $ \chi_{102}(1) $ & $ 17A $ & $ 2A $  & $ 34A $ \\
\cline{1-6}
$ J_{1} $ & $ \chi_{9}(1)=120 $ & $ \chi_{2}(1)=56 $ & $ 3A $ & $ 2A $ & $ 6A $\\
\cline{1-6}
$ O'N $ & $ \chi_{8}(1)=32395 $ & $ \chi_{23}(1)=175616 $ & $ 5A $ & $ 2A $ & $ 10A $ \\
\cline{1-6}
$ J_{3} $ & $ \chi_{2}(1)=85 $  & $ \chi_{14}(1)=1920 $ & $ 5A $ & $ 2A $ & $ 10A $ \\
\cline{1-6}
$ Ly $ & $ \chi_{5}(1)=48174 $ & $ \chi_{7}(1)=120064 $ & $ 7A $ & $ 2A $ & $ 14A $ \\
\cline{1-6}
$ J_{4} $ & $ \chi_{9}(1)=1187145 $ & $ \chi_{53}(1)=1981808640 $ & $ 5A $ & $ 2A $ & $ 10A $ \\
\cline{1-6}
$ ^{2}\mathrm{F}_{4}(2)' $ &  $ \chi_{8}(1)=325 $ & $ \chi_{21}=2048 $ & $ 5A $ & $ 2A $ & $ 10A $ \\
\hline
\end{tabular}
\end{center}

For the sporadic simple groups in Table \ref{SporadicTable2}.2, we have two irreducible characters $ \theta_{1} $ and $ \theta_{2} $ of $ N $ of $ p_{1} $-defect zero and $ p_{2} $-defect zero, respectively. We also have two $ p_{i} $-elements of distinct orders for each $ \theta_{i} $ on which $ \theta_{i} $ vanishes. This is case (b). For the case when $ N\cong Suz $, we have that $ N $ has an irreducible character of $ 5 $-defect zero and elements of orders $ 5, 10 $ and $ 20 $. In the case when $ N\cong HS $, $ N $ has an irreducible character of $ 3 $-defect zero and elements of orders $ 3, 6 $ and $ 12 $. Both of these cases above satisfy case (d).

\begin{center}\label{SporadicTable2}
Table 3.2
\begin{tabular}{|c|c|c|c|c|c|c|}
\cline{1-7}
$ N $ & $ \theta_{1}(1) $ & $ \ord(v\mathcal{C}_{1}) $  & $ \ord(v\mathcal{C}_{2}) $ & $ \theta_{2}(1) $ &  $ \ord(v\mathcal{C}_{3}) $ & $ \ord(v\mathcal{C}_{4}) $ \\
\cline{1-7}
$ M_{12} $  &   $ \chi_{8}(1)=55 $ & $ 5A $    &  $ 10A $ & $ \chi_{7}(1)=54 $ & $ 3A $ & $ 6A $ \\
\cline{1-7}
$ M_{24} $  &   $ \chi_{3}(1)=45 $ & $ 5A $   &  $10A$ & $ \chi_{26}(1)=10395 $ & $ 3A $ & $ 6A $ \\
\cline{1-7}
$ J_{2} $  &   $ \chi_{18}(1)=225 $ & $ 5A $   &  $ 10A $ & $ \chi_{14}(1)=189 $ & $ 3A $ & $ 6A $ \\
\cline{1-7}
$ Co_{3} $  &   $ \chi_{6}(1)=896 $  & $ 7A $   &  $ 14A $ & $ \chi_{23}(1)=31625 $ & $ 5A $ & $ 10A $ \\
\cline{1-7}
$ Co_{1} $  &   $ \chi_{4}(1)=1771 $ & $ 11A $   &  $ 22A $  & $ \chi_{3}(1)=299 $ & $ 13A $ & $ 26A $ \\
\cline{1-7}
$ B $ & $ \chi_{11}(1)=3214743741 $ & $ 11A $ & $ 22A $ & $ \chi_{8}(1)=347643114 $ & $ 13A $ & $ 26A $\\
\cline{1-7}
$ Ru $ & $ \chi_{2}(1)=378 $ & $ 3A $ & $ 6A $ & $ \chi_{2}(1)=378 $ & $ 7A $ & $ 14A $ \\
\hline
\end{tabular}
\end{center}
We are left with the case when $ N\cong M_{22} $. There exists $ \theta_{1}=\chi_{10} $, $ \theta_{2}=\chi_{3} $ which are extendible to $ G $ by Lemma \ref{BCLP07Lemma5}. Note that $ \chi_{3}(3A)=\chi_{3}(6A)=0 $ and $ \chi_{10}(4A)=0 $ and hence this is case (c).

Suppose that $ N $ is an alternating group $ \A_{n} $, $ n\geqslant 5 $. Since $ N\notin \mathfrak{S} $, $ n\geqslant 8 $. Suppose $ N\cong \A_{8} $ or $ \A_{9} $. Then $ N $ has two irreducible characters $ \chi_{15} $ and $ \chi_{7} $ of $ 3 $-defect zero and $ 5 $-defect zero, respectively. The result follows because $ N $ has elements of orders $ 3 $, $ 5 $ and $ 15 $. This is case (a).

Suppose $ N\cong \A_{10} $. By Lemma \ref{pdefectzero5ormore}, $ N $ has irreducible characters $ \theta_{1} $ and $ \theta_{2} $, of $ 5 $-defect zero and $ 7 $-defect zero, respectively. Since $ N $ has elements of orders $ 5 $, $ 10 $, $ 7 $ and $ 21 $, the result follows using Lemma \ref{Bro16Lemma2.2}. This is case (b).

Suppose that $ N $ is an alternating group $ \A_{n} $, $ n\geqslant 11 $. By Lemma \ref{pdefectzero5ormore}, $ N $ has an irreducible character is of $ 5 $-defect zero. Note that $ N $ has three elements $ (12345) $, $ (12345)(6789)(10~11) $ and $ (12345)(67)(89) $ of orders $ 5$, $10 $ and $ 20 $, respectively. Using Lemma \ref{Bro16Lemma2.2}, we have three elements are vanishing elements of $ G $. This is case (d).

We may assume that $ N $ is simple group of Lie type. Suppose that $ N $ has an element of order $ 2s $, $ s $ an odd prime. Then $ N $ has irreducible characters (not necessarily distinct) $ \theta _{1} $ and $ \theta _{2} $ of $ 2 $-defect zero and of $ s $-defect zero by Lemma \ref{pdefectzero5ormore}. Since $ N $ has elements of orders $ 2 $, $ s $ and $ 2s $, the result follows since these two elements are vanishing elements by Lemma \ref{Bro16Lemma2.2}. This is case (a). We may assume that $ N $ has no element of order $ 2s $, $ s $ an odd prime. Then the centralizer of each involution contained in $ N $ is a $ 2 $-group. It follows from \cite[III, Theorem 5]{Suz61} that $ N $ is isomorphic to one of the following: $ \PSL_{2}(r) $, where $ r $ is a Fermat or Mersenne prime; $ \PSL_{2}(9) $; $ N\cong \PSL_{3}(4) $; $ N\cong ^{2}\!\!\mathrm{B}_{2}(2^{2n + 1}) $, $ n\geqslant 1 $.
 
Since $ N\notin \mathfrak{S} $, we may assume that 
$ N\cong \PSL_{2}(r) $, where $ r $ is a Fermat or Mersenne prime and $ r\geqslant 17 $ or $ N\cong ^{2}\!\!\mathrm{B}_{2}(2^{2n + 1}) $, $ n\geqslant 2 $. Suppose that $ N\cong \PSL_{2}(r) $, $ r\geqslant 17 $. Suppose that $ N\cong \PSL_{2}(17) $. Then $ G $ has vanishing elements of orders $ 2,4,3 $ and $ 9 $. If $ N\cong \PSL_{2}(31) $, then $ G $ has vanishing elements of orders $ 3,5 $ and $ 15 $. The result follows. Suppose that $ N\cong\PSL_{2}(r) $, where $ r \geqslant 127 $ is a Fermat or Mersenne prime. The character tables of $ \PSL_{2}(q) $ are well known (see for example \cite[Chapter 38]{Dor71}). In particular $ N $ has vanishing elements orders which are factors of $ (r+1)/2 $ and $ (r -1)/2 $. Since $ r $ is a Fermat or Mersenne prime either $ r - 1=2^{n} $ or $ r + 1=2^{n} $ for positive integer $ n $. This means that $ N $ has elements of orders $ 2 $ and $ 4 $. Hence either $ r + 1 $ or $ r -1 $ is divisible by $ 3 $. Since $ r \geqslant 157 $, we have that $ r\pm 1=3^{k} $, $ k\geqslant 5 $ or $ r \pm 1 $ is divisible by at least distinct primes. Hence $ N $ has vanishing elements of orders $ 2,4 ,3, 9$ or $ 3,q, 3q $ for some prime $ q\neq 3 $. In other words, this is case (b) or  case (a).

Suppose that $ N\cong ^{2}\!\!\mathrm{B}_{2}(2^{2n+1}) $ with $ n\geqslant 2 $. In this case, $ N $ has cyclic Hall subgroups $ H_{1} $, $ H_{2} $ and $ H_{3} $ with 
\begin{center}
$ |H_{1}|=2^{2n+1}-2^{n+1} + 1=r_{1} $, $ |H_{2}|=2^{2n+1} + 2^{n+1} + 1=r_{2} $ and $ |H_{3}|=2^{2n+1}-1=r_{3} $.
\end{center} 
Orders of elements of $ N $ consist of factors of $ r_{1} $, $ r_{2} $ and $ r_{3} $. Note that all the non-trivial elements of $ N $ are vanishing elements of $ G $. If $ r_{i} $ is not prime for some $ i=1,2,3 $, then the result follows since $ N $ has elements of order $ 2 $ and $ 4 $. This is case (b). Hence we may assume that $ r_{1} $, $ r_{2} $ and $ r_{3} $ are prime. This means $ |\pi(N)|=4 $. Using \cite{BCM01}, we have that $ N\cong ^{2}\!\!\mathrm{B}_{2}(32) $. Now $ N $ has elements of orders $ 2 $, $ 4 $, $ 5 $ and $ 25 $ which are vanishing elements of $ G $ and the result follows. This is case (b) and this concludes our argument.
\end{proof}

We are now ready to prove our reduction theorem below:

\begin{theorem}(\textbf{Reduction Theorem})\label{reduction}
Let $ G $ be a finite group and let $ p $ be a prime. If $ G $ satisfies property \eqref{ee:star} for $ p $, then there exists a solvable normal subgroup $ N $ of $ G $ such that one of the following holds:
\begin{itemize}
\item[(a)] $ G $ is solvable;
\item[(b)] $ p=2, 3 $ or $ 5 $, $ G/N \cong \A_{5} $ and one of the following holds:
\begin{itemize}
\item[(i)] $ N/\textbf{O}^{2}(N) $ is an elementary abelian group;
\item[(ii)] $ N/\textbf{O}_{\pi '}(G) $ is a $ \pi $-group with $ \pi =\{2,3,5 \} $, that is, $ \textbf{O}_{\pi '}(G) $ is a normal Hall $ \pi ' $-subgroup of $ G $;
\end{itemize} 
\item[(c)] $ p=2 $ and $ G/N\in \{\A_{6}, \A_{6}{:}2_{3}\} $;
\item[(d)] $ p=2, 3, 5 $ or $ 7 $ and $ G\cong \A_{7} $.
\item[(e)] $ p = 2 $ and $ G/N \cong \PSL_{2}(7)$;
\item[(f)] $ p = 3 $ and $G/N \cong \PSL_{2}(8)$;
\item[(g)] $ p = 2 $ and $G/N \cong \PSL_{3}(4)$;
\item[(h)] $ p = 2 $ and $G/N \cong ^{2}\!\!\mathrm{B}_{2}(8)$.
\end{itemize}
\end{theorem}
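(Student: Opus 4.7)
The plan is to reduce from a general $G$ satisfying property \eqref{ee:star} to the case where $G/R$, with $R$ the solvable radical of $G$, is almost simple with simple socle lying in $\mathfrak{S}$, and then to perform a case-by-case analysis of each of the seven groups in $\mathfrak{S}$ to fix the admissible prime $p$ and the structure of the corresponding solvable normal subgroup $N=R$.

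If $G$ is solvable we are in case (a). Otherwise let $R$ be the solvable radical of $G$, so that $G/R$ has trivial solvable radical, and let $K/R$ be the socle of $G/R$, a direct product $S_{1}\times\cdots\times S_{k}$ of isomorphic non-abelian simple groups $S_{i}\cong S$. I apply Proposition \ref{vanishingordersaremore} to $G/R$ with minimal normal subgroup $K/R$. A key observation is that any vanishing element $\bar{x}\in G/R$ lifts to a vanishing element $x\in G$ whose order is divisible by $\ord(\bar{x})$: if $\bar{\chi}\in\Irr(G/R)$ satisfies $\bar{\chi}(\bar{x})=0$ and $\chi\in\Irr(G)$ is its inflation to $G$, then $\chi(x)=\bar{\chi}(xR)=0$ for every preimage $x$ of $\bar{x}$. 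Consequently, every prime dividing a pairwise gcd of the orders of the vanishing elements of $K/R$ produced by Proposition \ref{vanishingordersaremore} also divides the corresponding gcd of their lifts. Each of cases (a)--(d) of that proposition yields two distinct primes $\ell_{1}\neq\ell_{2}$ each dividing some such pairwise gcd (in case (d), the gcd is $2q$ with $q$ odd, which is not a prime power at all), and property \eqref{ee:star} would then force $\ell_{1}=p=\ell_{2}$, a contradiction. Hence $K/R\in\mathfrak{S}$; in particular $k=1$ and $S=K/R$ is one of the seven groups in $\mathfrak{S}$. Since $G/R$ has trivial solvable radical and $S$ is non-abelian simple, $\Centralizer_{G/R}(S)=1$, so $S\trianglelefteq G/R\leqslant\Aut(S)$.

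Next I analyze each of the seven groups in $\mathfrak{S}$ individually, using the \Atlas{} character tables \cite{CCNPW85} and Lemmas \ref{BCLP07Lemma5} and \ref{MT-V11Theorem1.1} to compute the vanishing orders of each almost-simple overgroup of $S$. For each candidate $H$ with $S\trianglelefteq H\leqslant\Aut(S)$, I check whether the pairwise gcds of vanishing orders arising in $G$ (through lifts from $G/R\cong H$) can all be a single $p$-power. For instance, $\A_{5}$ has vanishing orders $\{2,3,5\}$, which are pairwise compatible with \eqref{ee:star} for $p\in\{2,3,5\}$; however $\SSS_{5}$ has vanishing orders including $4$ and $6$, and $\gcd(4,6)=2$ together with $\gcd(3,6)=3$ simultaneously force $p=2$ and $p=3$, a contradiction, so $G/R\cong\A_{5}$ in case (b). Parallel analyses for $\A_{6}$, $\A_{7}$, $\PSL_{2}(7)$, $\PSL_{2}(8)$, $\PSL_{3}(4)$, and ${}^{2}\!\mathrm{B}_{2}(8)$ determine the admissible $G/R$ and admissible $p$ in each of cases (c)--(h); in particular $\A_{6}{:}2_{3}$ is singled out in case (c) as the only outer extension of $\A_{6}$ whose additional vanishing orders remain compatible with \eqref{ee:star}.

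The main obstacle is the analysis of the solvable radical $R$ in each non-solvable case, e.g., the dichotomy in case (b) between $N/\OO^{2}(N)$ being elementary abelian and $N/\OO_{\pi'}(G)$ being a $\pi$-group with $\pi=\{2,3,5\}$, and the need to rule out every non-trivial solvable radical in case (d). This requires studying vanishing elements of $G$ that do \emph{not} factor through $G/R$: such an element arises from a character $\chi\in\Irr(G)$ with $R$ not contained in $\kernel\chi$, and by Clifford theory $\chi$ lies over some $\theta\in\Irr(R)$; vanishing of $\chi$ on an element whose image in $G/R$ is non-trivial produces vanishing orders of $G$ that combine primes of $R$ with primes of $|G/R|$. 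Together with Lemma \ref{FdGHN91Lemma3}, which constrains the Fitting height whenever $R$ is a product of an abelian and a nilpotent subgroup, this forces the listed structures on $N$. In case (d), the vanishing orders of $\A_{7}$ are already rich enough that any non-trivial solvable radical would introduce a gcd violating \eqref{ee:star} for every $p\in\{2,3,5,7\}$, so $R=1$ and $G=\A_{7}$.
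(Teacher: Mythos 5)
Your skeleton---pass to the solvable radical $R$, use Proposition \ref{vanishingordersaremore} to force the socle of $G/R$ into $\mathfrak{S}$, then examine the seven candidates---matches the paper's, but two load-bearing steps are not actually established. The reduction itself has holes. The socle of $G/R$ need not be a single minimal normal subgroup with isomorphic simple factors: if $G/R$ has two non-abelian minimal normal subgroups, each isomorphic to a member of $\mathfrak{S}$ (say $\A_5\times \PSL_2(7)$ with both factors normal), Proposition \ref{vanishingordersaremore} applies to neither and your argument is silent. The paper disposes of this by first proving solvability by induction when no composition factor lies in $\mathfrak{S}$ (using that solvable groups form a formation) and then invoking Theorem \ref{DPSS10aTheoremB} to get a \emph{unique} non-abelian composition factor. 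Moreover, your lifting observation only gives $\ord(\bar x)\mid \ord(x)$; in cases (b) and (d) of the proposition the contradiction needs two \emph{distinct} members of $\Vo(G)$ sharing the divisor $p_i$, and two distinct orders in $G/R$ can perfectly well lift to equal orders in $G$, since the extra factor contributed by $R$ is uncontrolled. The paper avoids this by applying the proposition to a minimal normal subgroup of $G$ itself, where the orders are exact.

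The larger gap is that the structural conclusions about $N$ are the actual content of cases (b)--(h), and you only name them as ``the main obstacle'' without proving them; the tools you gesture at are not the right ones. Lemma \ref{FdGHN91Lemma3} (Fitting height of a product of an abelian and a nilpotent group) plays no role in this theorem, and Clifford theory is not how the paper pins down $N$. The dichotomy in (b) is obtained as follows: \cite[Lemma 2.2]{MNO00} produces vanishing $2$-, $3$- and $5$-elements of $G$; a vanishing $r$-element with $r\notin\pi=\{2,3,5\}$ would be an isolated vertex and force $n(\Gamma(G))\geqslant 4>3=n(\Pi(\omega(\A_5)))$, contradicting Theorem \ref{DPSS10aTheoremB}; hence either every vanishing element is a $q$-element with $q\in\pi$, which is case (b)(i) by \cite[Theorem A]{ZLS11}, or some vanishing order is divisible by $qr$ with $r\notin\pi$, which yields a normal Sylow $r$-subgroup for each such $r$ and hence a normal Hall $\pi'$-subgroup, case (b)(ii). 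Likewise $N=1$ in case (d) does not follow from a gcd computation on $\Vo(\A_7)$: the paper uses \cite[Lemma 5.1]{DPSS10b} to show that a chief factor below $N$ would reduce the vanishing prime graph to at most two components, whereas $\A_7$ requires four. Without arguments of this kind your case analysis identifies the possible quotients $G/N$ but not the asserted structure of $N$, which is most of what the theorem claims.
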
 
\begin{proof}
Suppose that $G$ has no composition factor isomorphic to $ S\in \mathfrak{S} $. We prove that $G$ is solvable using induction on $|G|$. Let $N$ be a non-trivial normal subgroup of $G$.
Then $ G/N $ has no composition factor isomorphic to $S$ and satisfies property \eqref{ee:star} and hence $G/N$ is a solvable group. If $N_{1} $ and $N_{2} $ are two minimal normal subgroups of $G$, then $G/N_{1} $ and $G/N_{2} $ are solvable. Hence $G$ is solvable since the class of finite solvable groups is a formation. We may assume that G has a unique non-abelian minimal normal subgroup $N$. Then by Proposition \ref{vanishingordersaremore}, $G$ does not satisfy property \eqref{ee:star} for any prime $ p $.

We may assume that $G$ has a composition factor isomorphic to $ S \in \mathfrak{S} $. Since $G$ satisfies property \eqref{ee:star}, $\Gamma(G) $ is disconnected. By Theorem \ref{DPSS10aTheoremB}, $G$ has a unique non-abelian composition factor. Letting $ N $ to be the solvable radical of $ G $, we have that $ G/N $ is an almost simple group and there exists a normal subgroup $ M $ of $G$ such that $M/N$ is isomorphic to $ S \in \mathfrak{S} $.

Suppose that $ M = G $ and $ G/N \cong \mathrm{A}_{5} $. Then $ p = 2, 3 $ or $5$. By \cite[Lemma 2.2]{MNO00}, $G$ has a $2$-element, a $3$-element and a $5$-element which are vanishing elements. Let $\pi = \{2, 3, 5\} $. Suppose that there exist an $r$ element, $r \notin \pi $ prime, which is a vanishing element. Then $r$ is an isolated vertex in the vanishing prime graph of $G$ by property \eqref{ee:star}. In other words, $n(\Gamma(G)) \geqslant 4 > 3 = n(\Gamma(\A_{5}))$ by Theorem \ref{DPSS10aTheoremB}, a contradiction.
Hence if $G$ has another vanishing element other than $q$-elements, $q \in \pi $, then it has to be a $q$-singular element $g$ such that some prime $r \notin \pi $ divides $\ord(g)$, using property \eqref{ee:star}. This means that $G$ has a normal Sylow $r$-subgroup. Therefore either $\Van(G)$ consists of $2$-elements, $3$-elements and $5$-elements or $G$ has an additional vanishing element whose order is divisible by $ qr $, $q \in \pi $ and $ r \notin \pi $. The first case is (b)(i) by \cite[Theorem A]{ZLS11}.

For the latter case we have that $G$ has a normal Hall $ \pi' $-subgroup. This is case (b)(ii) of the theorem.

Suppose that $M < G$ and $G/N \cong \mathrm{S}_{5} $. Then there exist elements of orders $2$, $3$ and $6$ which are vanishing elements of $G/N$. Hence $G/N$ does not satisfy property \eqref{ee:star} and the result follows.

If $M/N \cong \mathrm{A}_{6}$, then $M/N$ has an irreducible character of $2$-defect zero and so vanishing elements of orders $2$ and $4$ by Lemma \ref{Bro16Lemma2.2}. This means that $G$ does not satisfy property \eqref{ee:star} for an odd prime $p$. Suppose that $p = 2$. Then checking the character tables in the \Atlas{} \cite{CCNPW85}, for an almost simple group $G/N$ such that $|G/M|\leqslant 2$, our result follows except for $ \mathrm{A}_{6} $ and $\mathrm{A}_{6} {:}2_{3} $. For $G$ such that $|G/M| = 4$, we have the character table from \GAP{} \cite{GAP17} and can conclude that $G$ does not satisfy property \eqref{ee:star}. Hence (c) follows.

Suppose that $M = G$ and $G/N \cong \mathrm{A}_{7}$. Then $G/N$ has vanishing elements of orders $3$, $4$, $5 $ and $7$. So $\Gamma(G/N) $ has four components. Suppose $N_{1} $ is a normal subgroup such that $N/N_{1} $ is a chief factor of $G$. In particular, $N/N_{1} $ is a $r$-group for some prime $r$. By \cite[Lemma 5.1]{DPSS10b}, $\Gamma(G/N_{1}) $ has at most two connected components. This means that $G/N_{1} $ does not satisfy property \eqref{ee:star}. We have that $G$ does not satisfy property \eqref{ee:star}. Hence $ p = 2, 3, 5 $ or $7$, $N = 1$ and this is case (d) of the theorem.

Suppose that $M < G $ and $ G/N \cong\mathrm{S}_{7} $. Then there exist elements of orders $ 2, 3 $ and $6$ which are vanishing elements of $G/N$ and the result follows.

Now if $M/N \in \{ \PSL_{2}(7), \PSL_{3}(4), ^{2}\!\mathrm{B}_{2}(8)\} $, then $M/N$ has an irreducible character of $2$-defect zero and elements of orders $2$ and $4$ and so $G/N$ does not satisfy property \eqref{ee:star} for an odd prime $p$. We may assume that $p = 2$.

Suppose that $M/N \cong \PSL_{2}(7)$ or $M/N \cong ^{2}\!\!\mathrm{B}_{2}(8)$. Then by consulting character tables in the \Atlas{} \cite{CCNPW85}, our result follows. Suppose $M/N \cong \PSL_{3}(4)$. Then for an almost simple $G$ such that $|G/M|\leqslant 6 $, our result follows by checking the explicit character tables in the \Atlas{} \cite{CCNPW85}. For the case when $|G/M| = 12$, we obtain the character table from \GAP{} \cite{GAP17} and our result follows.

Suppose that $G/N \cong \PSL_{2}(8) $. Then $p = 3$ and $G$ satisfies property \eqref{ee:star}. If $G/N \cong \PSL_{2}(8){:}3 $, then $G/N$ has vanishing elements of orders $2, 3$ and $6$ and the result follows.
\end{proof}

We prove the first part of Theorem A below. First we prove a lemma.

\begin{lemma}\label{everyphasavanishingelement}
Let $N$ be a non-abelian minimal normal subgroup of $G$. Suppose that $p$ is a prime such that $p \mid |N| $. Then there exists a vanishing element of $G$ that is a $p$-element.
\end{lemma}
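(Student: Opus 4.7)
The plan is to write $N = S_{1} \times \cdots \times S_{k}$ with each $S_{i} \cong S$ a non-abelian simple group (so $p \mid |S|$) and to split the argument according to whether $S$ admits an irreducible character of $p$-defect zero. By Lemma \ref{pdefectzero5ormore}, such a character exists whenever $p \geqslant 5$ or $S$ is a group of Lie type, so the residual case reduces to $p \in \{2, 3\}$ with $S$ alternating or sporadic.

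For the generic case, suppose $\theta \in \Irr(S)$ has $p$-defect zero. Then the external product $\varphi = \theta \times \cdots \times \theta \in \Irr(N)$ has degree $\theta(1)^{k}$, whose $p$-part equals $(|S|_{p})^{k} = |N|_{p}$; hence $\varphi$ is itself of $p$-defect zero as a character of $N$. Lemma \ref{Bro16Lemma2.2} then yields that every element of $N$ whose order is divisible by $p$ is a vanishing element of $G$. Taking any non-trivial $p$-element $x \in S_{1}$, the tuple $(x, 1, \ldots, 1) \in N$ is the required vanishing $p$-element.

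For the residual case, the plan is to exhibit an irreducible character $\theta$ of $S$ that (a) extends to $\Aut(S)$ and (b) vanishes on some non-trivial $p$-element $x \in S$. Lemma \ref{BCLP07Lemma5} then says that $\varphi = \theta \times \cdots \times \theta$ extends to an irreducible character of $G$, and this extension vanishes on $(x, 1, \ldots, 1)$. For an alternating group $A_{n}$ (with $n \neq 6$, since $A_{6}$ has both a $2$- and a $3$-defect zero character and is handled by the generic case), I would use the natural $(n-1)$-dimensional character, which extends to $S_{n} = \Aut(A_{n})$ and vanishes on every element with exactly one fixed point; the required $p$-element is then produced by a direct cycle-type construction, e.g.\ $(1234)(56) \in A_{7}$ for $p = 2$, or $(123)(456) \in A_{7}$ for $p = 3$. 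For the finitely many sporadic $S$ lacking a $p$-defect zero character, a suitable $\theta$ can be located by inspection of the \Atlas{}.

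The principal obstacle is the bookkeeping in the residual case: one must first delineate the finite list of exceptional pairs $(S, p)$, and then case-by-case produce the extendible character together with the $p$-element on which it vanishes. The alternating cases yield to a short cycle-type analysis, while the sporadic cases require explicit \Atlas{} consultation; there is no uniform shortcut.
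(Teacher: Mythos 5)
Your two-case architecture is exactly the paper's: in the generic case (Lie type or $p\geqslant 5$) form the external product of a $p$-defect zero character and invoke Lemma \ref{Bro16Lemma2.2}, and in the residual case ($p\in\{2,3\}$, $S$ sporadic or alternating) find an $\Aut(S)$-extendible $\theta$ vanishing on a $p$-element and invoke Lemma \ref{BCLP07Lemma5}. The generic case is correct, and the mechanics of the residual case are also sound: the extension of $\theta\times\cdots\times\theta$ takes the value $\theta(x)\theta(1)^{k-1}=0$ on $(x,1,\dots,1)$.

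The gap is your choice of $\theta$ for the alternating groups. The natural $(n-1)$-dimensional character $\chi$ satisfies $\chi(g)=\mathrm{fix}(g)-1$, so it vanishes precisely on the elements with exactly one fixed point. But a $p$-element of $\Sym(n)$ moves a number of points that is a sum of $p$-powers each at least $p$, hence divisible by $p$; an element of $p$-power order with exactly one fixed point therefore exists only when $n\equiv 1\pmod p$. For $p=2$ and $n$ even your character does not vanish on any $2$-element whatsoever, and such $n$ genuinely occur in the residual case: by the Granville--Ono classification underlying Lemma \ref{pdefectzero5ormore}, $\A_n$ has a $2$-block of defect zero only when $n$ or $n-2$ is triangular, so for instance $\A_{14}$ with $p=2$ has no defect-zero character and also no $2$-element with exactly one fixed point. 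Your construction produces nothing for such pairs $(S,p)$, and "a short cycle-type analysis" with $\chi_{(n-1,1)}$ alone cannot close this. The paper avoids the problem by citing Lemma 2.3 and Proposition 2.4 of \cite{DPSS09}, which supply, for every sporadic or alternating $S$ with $n\geqslant 7$ and every $p\in\{2,3\}$ dividing $|S|$, an $\Aut(S)$-extendible irreducible character vanishing on a $p$-element, using characters attached to more carefully chosen partitions (via Murnaghan--Nakayama-type computations). You would need either to import that result or to replace your single family of characters by one that actually covers all $n$.
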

\begin{proof}
Since $N$ is non-abelian, $N = S_{1}\times S_{2} \times \cdots \times S_{k} $, where $ S_{i} \cong S $, and $S$ is non-abelian simple group. Let $S$ be a finite group of Lie type or $ p \geqslant 5 $. By Lemma \ref{pdefectzero5ormore}, $S$ has an irreducible character $\theta $ of $p$-defect zero. Let $x$ be a $p$-element of $S$.
Then $\psi = \theta \times \theta \times \cdots \times \theta $ is an irreducible character of $N$ of $p$-defect zero. Hence $ g = x \times x \times \cdots \times x \in N $ is a $p$-element and by Lemma \ref{Bro16Lemma2.2}, $g$ is a vanishing element of $G$.

Suppose that $p \in \{2, 3\}$ and $S$ is a sporadic simple group or an alternating group $\mathrm{A}_{n}$, $n \geqslant 7$. By \cite[Lemma 2.3]{DPSS09} and \cite[Proposition 2.4]{DPSS09}, there exists an irreducible character $ \theta $ of $S$ that extends to $\Aut(S)$ and $\theta(x) = 0 $ for some $p$-element $ x \in S $. Let
$g = x \times x \times \cdots \times x \in N$ and note that $g$ is a $p$-element. By \cite[Proposition 2.2]{DPSS09}, $ \theta \times \theta \times \cdots \times \theta \in \Irr(N) $ extends to some $\chi \in \Irr(G)$. Hence $\chi(g) = (\theta(x))^{k} = 0 $ and this concludes our proof.
\end{proof}

\begin{theorem}\label{vanishingorderimpliessolvable} Let $G$ be a finite group and let $p$ be a prime. Suppose that $|\Vo_{p'}(G)| = 1$. Then $G$ is solvable.
\end{theorem}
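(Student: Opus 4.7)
The plan is to prove the contrapositive: if $G$ is non-solvable, then $|\Vo_{p'}(G)|\geqslant 2$. The mechanism I would use is Lemma~\ref{everyphasavanishingelement}, applied to a non-abelian chief factor of $G$ and then transferred back to $G$ itself via a primary-decomposition lift.

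First, I would note that a non-solvable $G$ admits at least one non-abelian chief factor $M/K$, and such a factor is isomorphic to $S^{k}$ for some non-abelian simple group $S$. Burnside's $p^{a}q^{b}$ theorem forces $|\pi(S)|\geqslant 3$, so I can pick two distinct primes $q_{1},q_{2}\in\pi(S)\setminus\{p\}$. I would then apply Lemma~\ref{everyphasavanishingelement} to $G/K$ together with its non-abelian minimal normal subgroup $M/K$ to obtain, for each $i\in\{1,2\}$, a $q_{i}$-element $\bar g_{i}$ of $G/K$ and a character $\chi_{i}\in\Irr(G/K)$ such that $\chi_{i}(\bar g_{i})=0$.

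The central step is then the lifting from $G/K$ to $G$: I need a vanishing $q_{i}$-element of $G$, not merely of $G/K$. Starting from any preimage $g_{i}\in G$ of $\bar g_{i}$, I would write $g_{i}=h_{i}\,y_{i}$ in commuting $q_{i}$- and $q_{i}'$-parts. Since $\bar g_{i}$ has $q_{i}$-power order in $G/K$, the $q_{i}'$-part $y_{i}K$ must be trivial, so $y_{i}\in K$ and hence $h_{i}K=\bar g_{i}$. Viewing $\chi_{i}$ as a character of $G$ by inflation from $G/K$ then gives $\chi_{i}(h_{i})=\chi_{i}(h_{i}K)=\chi_{i}(\bar g_{i})=0$, so $h_{i}$ is a vanishing $q_{i}$-element of $G$, and non-triviality of $\bar g_{i}$ forces $\ord(h_{i})=q_{i}^{a_{i}}$ for some $a_{i}\geqslant 1$.

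To conclude, since $q_{1}\neq q_{2}$ are distinct and both coprime to $p$, the orders $q_{1}^{a_{1}}$ and $q_{2}^{a_{2}}$ are two distinct elements of $\Vo_{p'}(G)$, contradicting $|\Vo_{p'}(G)|=1$; therefore $G$ is solvable. The only slightly delicate point is the lifting step, and it rests on the elementary observation that every coset of $K$ of $q_{i}$-power order contains a $q_{i}$-element, namely the $q_{i}$-primary component of any representative. I do not anticipate any genuine obstacle beyond this.
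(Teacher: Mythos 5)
Your proof is correct, and it reaches the conclusion by a genuinely different route from the paper. The paper argues by induction on $|G|$: for any nontrivial normal subgroup $N$ it asserts $|\Vo_{p'}(G/N)|\leqslant 1$, uses the formation property of solvable groups to handle the case of two minimal normal subgroups, reduces to a unique non-abelian minimal normal subgroup $N$ of $G$ itself, and then applies Lemma~\ref{everyphasavanishingelement} together with $|\pi(N)|\geqslant 3$ to get two vanishing elements of distinct $p'$-orders. You instead take the contrapositive, work with an arbitrary non-abelian chief factor $M/K\cong S^{k}$, apply the same Lemma~\ref{everyphasavanishingelement} in the quotient $G/K$, and transfer the vanishing $q_{i}$-elements back to $G$ by replacing a preimage with its $q_{i}$-primary component. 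The core mechanism (Burnside forcing $|\pi(S)|\geqslant 3$, plus Lemma~\ref{everyphasavanishingelement} producing vanishing elements of two distinct prime-power orders coprime to $p$) is identical, so the proofs are close in spirit; but your organization has a concrete advantage. The paper's inductive step quietly requires knowing how vanishing $p'$-orders in $G/N$ relate to those in $G$, which is exactly the kind of lifting issue you make explicit; by producing elements of \emph{prime-power} order $q_{1}^{a_{1}}$ and $q_{2}^{a_{2}}$ for distinct primes $q_{1}\neq q_{2}$, you guarantee the two members of $\Vo_{p'}(G)$ are distinct without any comparison of orders across a quotient, and you avoid the induction and the two-minimal-normal-subgroups case entirely. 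The lifting step itself is sound: since $g_{i}K$ has $q_{i}$-power order, the $q_{i}'$-part of $g_{i}$ lies in $K$, so the $q_{i}$-part $h_{i}$ represents the same coset and inherits $\chi_{i}(h_{i})=0$ under inflation.
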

\begin{proof}
We prove that $G$ is solvable using induction on $|G|$. Let $N$ be a non-trivial normal subgroup of $G$. Then $|\Vo_{p'}(G/N)|\leqslant 1$ and hence $G/N$ is a solvable group. If $N_{1} $ and $N_{2}$ are two minimal normal subgroups of $G$, then both $G/N_{1}$ and $G/N_{2}$ are solvable. Hence $G$ is solvable. We may assume that $G$ has a unique non-abelian minimal normal subgroup $N$. Note that $|\pi(N)| \geqslant 3 $ by \cite[Theorem 3.10]{Isa06}. Using Lemma \ref{everyphasavanishingelement}, we have that $ |\Vo_{p'}(G)| \geqslant 2$. Hence the result follows.
\end{proof}

\section{Solvable groups}

In this section we shall restate and prove our main theorems. We start with Theorem D.

\begin{proof}[\textbf{Proof of Theorem D}]
Suppose that $ \Gamma(G) $ has one component. Then all vanishing elements are $ p $-elements for some $ p $. By \cite[Theorem A]{BDS09}, $ G $ is either a $ p $-group or $ G/\textbf{Z}(G) $ is a Frobenius group with a Frobenius complement of $ p $-power order and $ \textbf{Z}(G)=\textbf{O}_{p}(G) $. This is (a) and (b).

Suppose $ \Gamma(G) $ is disconnected. By Theorem \ref{DPSS10bTheoremA}, $ \Gamma(G) $ has at exactly two connected components. This means that $ G $ has vanishing elements which are $ p $-elements and $ q $-elements for some distinct primes $ p $ and $ q $. Note that $ G $ is a Frobenius group or a nearly $ 2 $-Frobenius group.

Suppose that $ G $ is a Frobenius group. Let the Frobenius complement be of $ p $-power order for some prime $ p $. Suppose that the order of the kernel $ K $ is divisible by at least two distinct primes. There exists a $ q $-element which is a vanishing element of $ G $ contained in $ K $. Using \cite[Proposition 2.1]{DPSS10b}, there exists $ g $ which is a vanishing element such that $ \pi(\ord(g))=\pi(|K|) $, a contradiction. Therefore $ |K| $ is of $ q $-power order as expected in (c). If the Frobenius complement is divisible by two distinct primes, $ p $ and $ q $, then there exists a prime $ r $ that divides $ |K| $. Using \cite[Proposition 3.2]{DPSS10b}, there exists a vanishing element $ g $ such that $ \ord(g) $ is either divisible by $ pr $ or $ qr $, a contradiction. 

Suppose that $ G $ is a nearly $ 2 $-Frobenius group. Then there exist two normal subgroups $ F $ and $ L $ of $ G $ with the following properties: $ F=F_{1}\times F_{2} $ is nilpotent, where $ F_{1} $ and $ F_{2} $ are normal subgroups of $ G $. Furthermore, $ G/F $ is a Frobenius group with kernel $ L/F $, $ G/F_{1} $ is a Frobenius group with kernel $ L/F_{1} $, and $ G/F_{2} $ is a $ 2 $-Frobenius group. Since $ G/F_{2} $ is a $ 2 $-Frobenius group and $ G/F $ is a Frobenius group with kernel $ L/F $, it follows that $ L/F_{2} $ is a Frobenius group with kernel $ F/F_{2} $. By the argument above, $ |G/L| $ is a $ p $-power and $ |L/F_{1}| $ is a $ q $-power. If $ |F| $ is divisible by three distinct primes, then there exists a prime $ r $ different from $ p $ and $ q $. Using \cite[Proposition 3.2]{DPSS10b}, there exists a vanishing element $ g $ such that $ \ord(g) $ is either divisible by $ pr $ or $ qr $, a contradiction. Hence $ |F| $ is divisible by at most two primes $ p $ and $ q $. The result then follows.
\end{proof}

We note here that the converse of this theorem is not true (see \cite[Example 2]{BDS09}). The following example shows a group which satisfies properties in part (iv) of the theorem.

\subsection{Example}
Let $ G=((\mathrm{C}_{2}\times \mathrm{C}_{2})\rtimes \mathrm{C}_{9})\rtimes \mathrm{C}_{2} $, $ L=(\mathrm{C}_{2}\times \mathrm{C}_{2})\rtimes \mathrm{C}_{9} $, $ F_{1}=\mathrm{C}_{2}\times \mathrm{C}_{2} $, $ F_{2}= \mathrm{C}_{3} $ and $ F=(\mathrm{C}_{2}\times \mathrm{C}_{2})\times \mathrm{C}_{3} $. Then $ G/F\cong \mathrm{S}_{3} $ and $ G/F_{1}\cong \mathrm{D}_{18} $, a Frobenius group of with kernel $ L/F_{1}\cong \mathrm{C}_{9} $. Also $ G/F_{2}\cong \SSS_{4} $, a $ 2 $-Frobenius group. In other words, $ G $ is a nearly $ 2 $-Frobenius group and $ \mathrm{Vo}(G)=\{2,4,9\} $ using \GAP{} \cite{GAP17}.

\begin{theorem}\label{thmB}
Let $G$ be a finite solvable group and $p $ be a prime. If $G$ satisfies property \eqref{ee:star} for $p$, then $\textbf{O}^{pp'pp'}(G) = 1$.
\end{theorem}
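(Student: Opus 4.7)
The plan is to combine a graph-theoretic analysis of the vanishing prime graph $\Gamma(G)$ with Theorem~A and the Dolfi--Pacifi--Sanus--Spiga classification of solvable groups with disconnected $\Gamma(G)$ (Theorem~\ref{DPSS10bTheoremA}).

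The crucial consequence of $(\star)$ is that each prime $q\neq p$ divides \emph{at most one} element of $\Vo(G)$: if $q\mid a$ and $q\mid b$ for $a,b\in\Vo(G)$, then $q\mid\gcd(a,b)=p^m$ forces $q=p$. Hence $\Gamma(G)$ decomposes into a (possibly empty) ``$p$-hub'' component containing $p$ and every non-$p$ prime that divides some element of $\Vo_p(G)$, plus one additional clique component per element of $\Vo_{p'}(G)$. Since $G$ is solvable, Theorem~\ref{DPSS10bTheoremA} bounds the number of components by $2$, so one of the following occurs: $\Vo_{p'}(G)=\emptyset$; or $\Vo_p(G)=\emptyset$; or both are non-empty with $|\Vo_{p'}(G)|=1$. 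In the first case Theorem~\ref{normalp-complementvanishing} yields the stronger $\textbf{O}^{pp'}(G)=1$. In the second, every $p$-element of $G$ is non-vanishing, so by the Dolfi--Pacifi--Sanus--Spiga theorem recalled in the introduction $G$ has a normal Sylow $p$-subgroup $P$; a short computation of the iterated $p,p'$-residuals, using that $G/P$ is a $p'$-group, gives $\textbf{O}^{pp'p}(G)=1$.

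In the remaining case, write $\Vo_{p'}(G)=\{b\}$. If $b$ has at least two distinct prime divisors, Theorem~A(a) applies immediately. Otherwise $b=q^n$ for a single prime $q\neq p$; then $\Gamma(G)$ is disconnected and Theorem~\ref{DPSS10bTheoremA} makes $G$ either Frobenius or nearly $2$-Frobenius. When $G=K\rtimes H$ is Frobenius, I split by the location of $q$. If $q\mid|K|$, then $Q:=K_q$ is a normal Sylow $q$-subgroup of $G$ by the nilpotence of $K$, hence $Q'\trianglelefteq Q\trianglelefteq G$ is subnormal and Theorem~A(b) closes the argument. If $q\mid|H|$, then since every non-identity element of a Frobenius complement is vanishing in $G$ via induced characters from non-trivial characters of $K$, property $(\star)$ forces $\pi(H)\subseteq\{p,q\}$; if in addition $p\mid|K|$, then $K_p$ is a normal Sylow $p$-subgroup of $G$ and the previous case applies. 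If instead $p\mid|H|$, then $(\star)$ prohibits elements of order $pq$ in $H$, so $H$ itself is a Frobenius group with either $H_p$ or $H_q$ as kernel; a direct computation of the iterated residuals $\textbf{O}^p$ and $\textbf{O}^{p'}$, using that $K$ is a $\{p,q\}'$-group, then yields $\textbf{O}^{pp'pp'}(G)=1$. The nearly $2$-Frobenius case is handled analogously by running the Frobenius analysis on the quotients $G/F_1$ and $G/F_2$ furnished by the definition, with $(\star)$ propagating through each quotient.

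The main obstacle is the Frobenius subcase with $\{p,q\}\subseteq\pi(H)$ together with the nearly $2$-Frobenius case: neither is directly covered by Theorem~A, so the $\textbf{O}^p/\textbf{O}^{p'}$-bookkeeping must be carried out by hand, leveraging the tight constraints on $H$ (and on the $2$-Frobenius sections) imposed by $(\star)$.
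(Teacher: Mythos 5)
Your overall architecture is sound and, in places, genuinely different from the paper's: you route the subcase where $b$ has two prime divisors, and the Frobenius subcase where $q$ divides the kernel, through Theorem~A (whose proof in the paper is independent of this theorem, so there is no circularity), whereas the paper's argument is self-contained and instead grinds through Brown's classification of Frobenius complements (Theorem~\ref{Bro01Theorem1.4}) together with Bechtell's Sylow-tower theorems. Your opening observation -- that each prime $r\neq p$ divides at most one element of $\Vo(G)$, so the components of $\Gamma(G)$ are a $p$-hub plus one clique per element of $\Vo_{p'}(G)$ -- is correct and gives a cleaner trichotomy than the paper's; the cases $\Vo_{p'}(G)=\emptyset$ and $\Vo_{p}(G)=\emptyset$ are handled correctly (the residual computation from a normal Sylow $p$-subgroup is fine).

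There are, however, two genuine gaps. First, in the Frobenius subcase with $\pi(H)=\{p,q\}$ you write that \eqref{ee:star} prohibits elements of order $pq$ in $H$, ``so $H$ itself is a Frobenius group with either $H_p$ or $H_q$ as kernel.'' As stated this implication is false for general $\{p,q\}$-groups: $\mathrm{S}_{4}$ has no element of order $6$ yet is $2$-Frobenius, with neither Sylow subgroup normal. The claim is true here, but only because $H$ is a Frobenius \emph{complement}: if $2\in\{p,q\}$ then the unique involution of $H$ is central and multiplies against an element of the odd prime order to produce an element of order $pq$, a contradiction, so both primes are odd, $H$ is a Z-group, hence metacyclic with a normal Sylow subgroup, and the coprime action is forced to be fixed-point-free. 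None of this is in your write-up, and it is precisely the point where the paper invokes Theorem~\ref{Bro01Theorem1.4} and does its hardest case analysis. Second, the nearly $2$-Frobenius case cannot be dispatched as ``analogous'': the paper's treatment there relies on specific structural inputs (cyclicity of $G/L$ and $L/F$, the containment $G\setminus F\subseteq\Van(G)$, and the coprimality $\gcd(|G/L|,|L/F_{1}|)=1$) to pin down which quotients are $p$-groups and which are $p'$-groups before computing the iterated residuals; ``running the Frobenius analysis on $G/F_1$ and $G/F_2$'' does not by itself control $\textbf{O}^{pp'pp'}(G)$, and this case needs to be written out.
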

\begin{proof}
We first consider when $\Gamma(G) $ is connected. Suppose that $\Gamma(G) $ consists of a single vertex. If the vertex is $q$, then by Theorem \ref{normalp-complementvanishing2}, $G$ is a $q$-group or $G/\mathbf{Z}(G) $ is a Frobenius group with a Frobenius complement of $q$-power order and $\textbf{Z}(G) = \textbf{O}_{q}(G)$. If $p = q$, then $ \textbf{O}^{pp'}(G) = 1$. If $ p \not= q$, then $G$ has a normal Sylow $p$-subgroup. Hence $\textbf{O}^{p}(\textbf{O}^{pp'}(G)) = 1$.

Suppose that $\Gamma(G) $ has at least two vertices. Then $ p \mid a $ for all $ a \in \Vo(G) $. Using Theorem \ref{normalp-complementvanishing}, $\textbf{O}^{pp'}(G) = 1$.

Suppose that $\Gamma(G) $ is disconnected. By Theorem \ref{DPSS10bTheoremA}, $\Gamma(G) $ has two connected components. Since $G$ satisfies property \eqref{ee:star} for $p$, there exists exactly one $b \in \Vo(G) $ such that $ p \nmid b$. If $b$ is divisible by two distinct primes $p_{1}, p_{2}$, then $G$ has a normal Sylow $p_{i}$-subgroup, $i = 1, 2 $ by \cite[Theorem A]{DPSS09} since G has no $p_{i}$-elements as vanishing elements. Hence $G$ has a normal Hall $p'$-subgroup $H$ and $H$ is a normal nilpotent $p$-complement of $G$.

We may assume that $b \in \Vo(G) $ and $b$ is a $q$-power for the rest of the proof. If $a = p^{m} $ for all $ a \in \Vo(G)\setminus \{b\}$, then by Theorem D, $G$ is a Frobenius group with a Frobenius complement of $r$-power order and a kernel of $s$-power order or $G$ is a nearly $2$-Frobenius group and $\pi(G) = \{r, s\}$. Assume that $G$ is a Frobenius group with a Frobenius complement of $r$-power order and a kernel $K$ of $s$-power order. If $r = p$, then the result follows. If $s = p$, then $\textbf{O}^{p}(G) = G$ and $\textbf{O}^{p'}(G) = K$. Hence $\textbf{O}^{p}(K) = 1 $ and the result follows. Assume that $G$ is a nearly $2$-Frobenius group and $\pi(G) = \{r, s\}$. Then there exist two normal subgroups $F$ and $L$ of $G$ with the following properties: $F = F_{1} \times F_{2} $ is nilpotent, where $F_{1}$ and $F_{2} $ are normal subgroups of $G$. Furthermore, $G/F$ is a Frobenius group with kernel $L/F$, $G/F_{1}$ is a Frobenius group with kernel $L/F_{1}$, and $G/F_{2}$ is a $2$-Frobenius group. In
particular, $|G/L|$ is an $r$-power, $|L/F_{1}|$ is an $s$-power and $|F_{1}|$ is an $r$-power. If $r = p$, then $\textbf{O}^{p}(G) \leqslant L$, $\textbf{O}^{p'}(L) \leqslant F_{1}$ and $\textbf{O}^{p}(F_{1}) = 1$. Assume that $s = p$. Then $\textbf{O}^{p}(G) = G$, $\textbf{O}^{p'}(G) \leqslant L$, $\textbf{O}^{p}(L) \leqslant F_{1}$ and $\textbf{O}^{p'}(F_{1}) = 1$.

Assume that there exist $a,c \in \Vo(G)$ such that $c = p^{m}$ and $c = p^{k}n$, where $k$, $ m $ and $n$ are positive integers and $n \geqslant 2$. Let $\pi = \{p, q\}$. Then the Hall $\pi'$-subgroup of $G$ is nilpotent and normal. By Theorem \ref{DPSS10bTheoremA}, $G$ is a Frobenius group or a nearly $2$-Frobenius group. Suppose that $G$ is a Frobenius group $KM$ with Frobenius kernel $K$ and Frobenius complement $M$. If $n \mid |M| $, let $r$ be a prime such that $r \mid n$. Then consider $r$ and $q$. If $s$ is a prime such that $s \mid |K|$, then by \cite[Proposition 3.2]{DPSS10a}, there exists an vanishing element $g$ of $G$ such that $\ord(g)$ either divisible by $rs$ or $qs$, a contradiction. Hence $|M|$ is divisible by two primes $p$ and $q$. Using Theorem \ref{Bro01Theorem1.4}, we have that $M$ has a unique normal subgroup $N$ such that all the Sylow subgroups of $N$ are cyclic and $M/N \in \{1, \mathrm{V}_{4}, \mathrm{A}_{4}, \mathrm{S}_{4}, \mathrm{A}_{5}, \mathrm{S}_{5}\} $. Since $G$ is solvable we need not consider the cases when $M/N \cong\mathrm{A}_{5}$ or $ \mathrm{S}_{5} $. Note that $N$ is metacyclic and supersolvable by \cite[p. 290]{Rob95}.

Suppose that $M/N \cong 1$. Let $P$ be a Sylow $p$-subgroup of $G$ and $Q$ be a Sylow $q$-subgroup of $G$. If $ p > q $, then by \cite[Theorems 6.2.5 and 6.2.2]{Bec71}, $P$ is a normal subgroup of $M$ and hence $KP$ is normal in $G$. Hence $\textbf{O}^{pp'pp'}(G) = 1$. If $q > p$, then by \cite[Theorems 6.2.5 and 6.2.2]{Bec71}, $Q$ is a normal subgroup of $M$ and so $K$Q is normal in $G$. Hence $\textbf{O}^{pp'}(G) = 1$. Assume that $M/N \cong \mathrm{V}_{4} $. So $\pi(M) = \{2, r\}$, where $r$ is an odd prime. Let $R$ be a Sylow $r$-subgroup of $G$. Since $N$ is supersolvable, $R$ is normal in $M$ by \cite[Theorems 6.2.5 and 6.2.2]{Bec71}. It follows that $KR$ is normal in $G$. If $p = 2$, then $\textbf{O}^{pp'}(G) = 1$. If $p = r$, then $\textbf{O}^{pp'pp'}(G) = 1$. Assume that $M/N \cong \mathrm{A}_{4}$. Then $\pi(M) = \{2, 3\}$. If $p = 2$, then $N$ is a $2$-group since the Sylow $3$-subgroup of $M$ is cyclic, and $\textbf{O}^{pp'pp'}(G) = 1$. If $p = 3$, then $N$ is a $3$-group. This is because if a Sylow $2$-subgroup $T$ has order greater than $4$, then $T$ is a generalized quaternion and hence has elements of order $2$ and $4$ which are vanishing elements of $G$. Therefore $\textbf{O}^{pp'pp'}(G) = 1$, as required. Assume that $M/N \cong\mathrm{S}_{4}$. Then $\pi(M) = \{2, 3\}$, $p = 2$ and $N$ is a $2$-group. Hence $\textbf{O}^{pp'pp'}(G) = 1$.

Suppose that $G$ is a nearly $2$-Frobenius group. Then there exist two normal subgroups $F$ and $L$ of $G$ with the following properties: $F = F_{1} \times F_{2} $ is nilpotent, where $F_{1}$ and $F_{2}$ are normal subgroups of $G$. Furthermore, $G/F$ is a Frobenius group with kernel $L/F$, $G/F_{1}$ is a Frobenius group with kernel $L/F_{1}$, and $G/F_{2}$ is a $2$-Frobenius group. Since $G/F_{2}$ is a $2$-Frobenius group and $G/F$ is a Frobenius group with kernel $L/F$, it follows that $L/F_{2}$ is a Frobenius group with kernel $F/F_{2}$. By \cite[Remark 1.2]{DPSS10a}, $G/L$ is cyclic and $L/F$ is cyclic with $|L/F|$ odd. Since $G/F_{1}$ is a Frobenius group with kernel $L/F_{1}$, we have $\gcd(|G/L|,|L/F_{1}|) = 1$. Since $G \setminus F \subseteq \Van(G)$, we have that $|G/L|$ is an $r$-power and $|L/F|$ is $s$-power, where $r$ and $s$ are primes. If $p = r$, then $\textbf{O}^{p}(G) \leqslant L$, $\textbf{O}^{p'}(L) \leqslant F_{1}$. Hence $\textbf{O}^{pp'pp'}(G) = 1$. If $p = s$, then $\textbf{O}^{pp'}(G) \leqslant L$. Note that the Hall $p'$-subgroup $H$ of $L$ is normal in $L$. Then $\textbf{O}^{p}(L) \leqslant H$. Hence $\textbf{O}^{p'}(H) = 1$. Suppose that $p \nmid a$ for all $a \in \Vo(G)$. Then $\Vo(G) = \{a, b\}$. Either $a$ or $b$ is a prime power by \cite[Theorem B]{MNO00}. Assume that $a$ is an $r$-power. If $b$ is not a prime power, then the Hall $r'$-subgroup is nilpotent and normal in $G$. It follows that $\textbf{O}^{pp'pp'}(G) = 1$.

Assume that there exists $c \in \Vo(G)$ such that $c = p^{k}n$, where $ k $ and $n$ are positive integers and $n \geqslant 2$ and there is no $ a \in \Vo(G)$ such that $a = p^{m}$. Then the Hall $q'$-subgroup is normal and nilpotent, and the result follows. 

Suppose that $p \nmid a $ for all $ a \in \Vo(G)$. Then $\Vo(G) =\{a, b\}$ where $ \gcd(a,b)=1 $. Either $a$ or $b$ is a prime power by \cite[Theorem B]{MNO00}. We may assume that $ b $ is a $ q $-power without loss of generality. Suppose further that $a$ is an $r$-power for some prime $ r $. The Hall $\{q, r\}'$-subgroup $H$ is nilpotent and normal in $G$. Then $\textbf{O}^{pp'}(G) \leqslant H$ and hence $\textbf{O}^{pp'pp'}(G) = 1$. If $a$ is not a prime power, then the Hall $q'$-subgroup is nilpotent and normal in $G$. It follows that $\textbf{O}^{pp'pp'}(G) = 1$. 
\end{proof}

\begin{proof}[\textbf{Proof of Theorem C}]
Since $ p > 7 $ in the hypothesis of Theorem C, $ G $ is solvable using Theorem \ref{reduction}. Now Theorem C follows from Theorem \ref{thmB}.
\end{proof}

\begin{proof}[\textbf{Proof of Theorem A}]
By Theorem \ref{vanishingorderimpliessolvable}, $G$ is solvable. Suppose that (a) holds. Then since there are no $ q $-elements that are vanishing elements for all $ q\neq p $, the Hall $ p' $-subgroup of $ G $ is normal and nilpotent by \cite[Theorem A]{DPSS09}. Hence $\textbf{O}^{pp'}(G) = 1 $

Suppose that (b) holds. Suppose that $\Gamma(G) $ is connected. Then $\Vo(G)=\{q^{n}\}$ or $\Vo(G)=\{q^{n}\} \cup X $, where $ X $ is a finite subset of  $ \{c\in \mathbb{N}: pq\mid c\} $. Then the Hall $q'$-subgroup $H$ is a normal and nilpotent subgroup of $G$ using \cite[Theorem A]{DPSS09}. So $\textbf{O}^{pp'}(G) \leqslant H$. It follows that $\textbf{O}^{pp'pp'}(G) = 1$. 

We may assume that $\Vo(G)= X \cup \{q^{n}\}\cup Y$, where $ X $ and $ Y $ are finite subsets of $\{a\in \mathbb{N}: a $ is a $p$-power$\} $ and  $ \{c\in \mathbb{N}: pq\mid c\}$, respectively. Let $ \pi=\{ p,q\} $ and suppose that $ P $ is a Sylow $ p $-subgroup of $ G $. Then the Hall $\pi'$-subgroup $H$ is a normal and nilpotent subgroup of $G$. Then $ \overline{G}=G/N $ is a $ \pi $-group, where $ N=H\textbf{O}_{p}(G)\textbf{O}_{q}(G) $. Since $ Q' $ is subnormal, we have that $ Q'\leqslant \textbf{O}_{q}(G) $ and $ \overline{G} $ is a product of an abelian $ q $-group $ \overline{Q}=QN/N $ and a $ p $-group $ \overline{P}=PN/N $. By Lemma \ref{FdGHN91Lemma3}, $ \overline{Q}\textbf{F}(\overline{G})  $ is a normal subgroup of $ \overline{G} $. Then $ \overline{G}/\overline{Q}\textbf{F}(\overline{G}) $ is a $ p $-group and $ \overline{Q}\textbf{F}(\overline{G})/\textbf{O}_{p}(\overline{G}) $ is a $ q $-group. Consider $ \textbf{O}_{p}(\overline{G})=P_{1}N/N $. Then $ P_{1}N/H\textbf{O}_{q}(G) $ is a $ p $-group and $ H\textbf{O}_{q}(G) $ is a $ p' $-group. Hence $\textbf{O}^{pp'pp'}(G) = 1$. 

We may assume that $\Gamma(G)$ is disconnected. Suppose that for all $a \in \Vo(G)\setminus \{q^{n}\}$, $a$ is not a $p$-power. Then the Hall $q'$-subgroup $H$ is a normal and nilpotent subgroup of $G$. Therefore $\textbf{O}^{pp'pp'}(G) = 1$.

Suppose that for all $a \in \Vo(G)\setminus \{q^{n}\}$, $a$ is a $p$-power. Then by Theorem D, $G$ is a Frobenius group with a Frobenius complement of $r$-power order and a kernel of $s$-power order or $G$ is a nearly $2$-Frobenius group and $\pi(G) = \{r, s\}$. Assume that $G$ is a Frobenius group with a Frobenius complement of $r$-power order and a kernel $K$ of $s$-power order. If $r = p$, then $\textbf{O}^{pp'}(G) = 1$. If $s = p$, then $\textbf{O}^{p}(G) = G$ and $\textbf{O}^{p'}(G) = K$. Hence $\textbf{O}^{p}(K) = 1 $ and the result follows. Assume that $G$ is a nearly $2$-Frobenius group and $\pi(G) = \{r, s\}$. Then there exist two normal subgroups $F$ and $L$ of $G$ with the following properties: $F = F_{1} \times F_{2} $ is nilpotent, where $F_{1}$ and $F_{2} $ are normal subgroups of $G$. Furthermore, $G/F$ is a Frobenius group with kernel $L/F$, $G/F_{1}$ is a Frobenius group with kernel $L/F_{1}$, and $G/F_{2}$ is a $2$-Frobenius group. In
particular, $|G/L|$ is an $r$-power, $|L/F_{1}|$ is an $s$-power and $|F_{1}|$ is an $r$-power. If $r = p$, then $\textbf{O}^{p}(G) \leqslant L$, $\textbf{O}^{p'}(L) \leqslant F_{1}$ and $\textbf{O}^{p}(F_{1}) = 1$. Assume that $s = p$. Then $\textbf{O}^{p}(G) = G$, $\textbf{O}^{p'}(G) \leqslant L$, $\textbf{O}^{p}(L) \leqslant F_{1}$ and $\textbf{O}^{p'}(F_{1}) = 1$.

Suppose that there exist $a,b \in \Vo(G)\setminus \{q^{n}\}$ such that $a = p^{m}$ and $b = p^{k}t$, where $k$, $ m $ and $t$ are positive integers and $t \geqslant 2$. Let $\pi = \{p, q\}$. Using \cite[Theorem A]{DPSS09}, the Hall $\pi'$-subgroup of $G$ is a nilpotent and normal subgroup of $ G $. By Theorem \ref{DPSS10bTheoremA}, $G$ is a Frobenius group or a nearly $2$-Frobenius group. Suppose that $G$ is a Frobenius group $KM$ with Frobenius kernel $K$ and Frobenius complement $M$. If $t \mid |M| $, let $r$ be a prime such that $r \mid t$. Since $ M\setminus \{1\}\subseteq \Van(G) $, $ G $ has some $ r $-elements which are vanishing elements. If $ r\neq q $, then  $|\Vo_{p'}(G)|\geqslant 2$, a contradiction. So $ r=q $ and $\Gamma(G)$ is connected, another contradiction from our hypothesis.

Suppose that $G$ is a nearly $2$-Frobenius group. Then there exist two normal subgroups $F$ and $L$ of $G$ with the following properties: $F = F_{1} \times F_{2} $ is nilpotent, where $F_{1}$ and $F_{2}$ are normal subgroups of $G$. Furthermore, $G/F$ is a Frobenius group with kernel $L/F$, $G/F_{1}$ is a Frobenius group with kernel $L/F_{1}$, and $G/F_{2}$ is a $2$-Frobenius group. Since $G/F_{2}$ is a $2$-Frobenius group and $G/F$ is a Frobenius group with kernel $L/F$, it follows that $L/F_{2}$ is a Frobenius group with kernel $F/F_{2}$. By \cite[Remark 1.2]{DPSS10a}, $G/L$ is cyclic and $L/F$ is cyclic with $|L/F|$ odd. Since $G/F_{1}$ is a Frobenius group with kernel $L/F_{1}$, we have $\gcd(|G/L|,|L/F_{1}|) = 1$. Since $G \setminus F \subseteq \Van(G)$, we have that $|G/L|$ is an $r$-power and $|L/F|$ is $s$-power, where $r$ and $s$ are primes. If $p = r$, then $\textbf{O}^{p}(G) \leqslant L$, $\textbf{O}^{p'}(L) \leqslant F_{1}$. Hence $\textbf{O}^{pp'pp'}(G) = 1$. If $p = s$, then $\textbf{O}^{pp'}(G) \leqslant L$. Note that the Hall $p'$-subgroup $H$ of $L$ is normal in $L$. Then $\textbf{O}^{p}(L) \leqslant H$. Hence $\textbf{O}^{p'}(H) = 1$ and the result follows. 
\end{proof}

\begin{proof}[\textbf{Proof of Theorem B}]
We consider the vanishing prime graph $ \Gamma(G) $ of $ G $. Suppose that $ \Gamma(G) $ is connected. Assume that $ \Gamma(G) $ consists of a single vertex. By Theorem \ref{normalp-complementvanishing2}, $G$ is a $p$-group or $G/\mathbf{Z}(G) $ is a Frobenius group with a Frobenius complement of $p$-power order and $\textbf{Z}(G) = \textbf{O}_{p}(G)$. If $ G $ is a $ p $-group then the result follows. We may assume that $G/\mathbf{Z}(G) $ is a Frobenius group with a Frobenius complement of $p$-power order and $\textbf{Z}(G) = \textbf{O}_{p}(G)$. If $ p $ is odd, then Sylow $ p $-subgroups are abelian and $ P/\textbf{O}_{p}(G) $ is cyclic. If $ p=2 $, then $ P/\textbf{O}_{p}(G) $ is either a generalized quaternion or cyclic since it is isomorphic to a Frobenius complement. If $ P/\textbf{O}_{p}(G) $ is a generalized quaternion, then $ G $ has vanishing elements of distinct orders divisible by $ 2 $ and $ 4 $, a contradiction. Hence our result follows.

If $ \Gamma(G) $ has more than one vertex, then it follows that the $ p $-elements $ G $ are non-vanishing. Hence $ P $ is normal in $ G $.

We may assume that $ \Gamma(G) $ is disconnected. By Theorem \ref{DPSS10bTheoremA}, $ G $ is either a Frobenius or a nearly $ 2 $-Frobenius group. Also note that $ p $ is an isolated vertex.

Suppose that $ G $ is a Frobenius group with a kernel $ K $ and a complement $ H $. If $ p\mid |K| $, then $ P $ is normal in $ G $ and the result follows. So $ p\mid |H| $. Then either $ P $ is a cyclic or a generalized quaternion. If $ P $ is generalized quaternion, then $ G $ contains vanishing elements of orders $ 2 $ and $ 4 $, a contradiction. Then $ P $ is cyclic as required. 

Suppose that $ G $ is a nearly $ 2 $-Frobenius group. Then there exist two normal subgroups $ F $ and $ L $ of $ G $ with the following properties: $ F=F_{1}\times F_{2} $ is nilpotent, where $ F_{1} $ and $ F_{2} $ are normal subgroups of $ G $. Furthermore, $ G/F $ is a Frobenius group with kernel $ L/F $, $ G/F_{1} $ is a Frobenius group with kernel $ L/F_{1} $, and $ G/F_{2} $ is a $ 2 $-Frobenius group. Note that $ G/L $ and $ L/F $ are both cyclic and $ F $ is nilpotent. Hence the result follows.
\end{proof}

\section{Acknowledgements}
The author would like thank the referee for the careful reading of this article and their comments.

\end{document}